\newtheorem{theorem}{Theorem}[section]
\newtheorem{proposition}[theorem]{Proposition}
\newtheorem{lemma}[theorem]{Lemma}
\newtheorem{corollary}[theorem]{Corollary}
\newtheorem{remark}[theorem]{Remark}
\newtheorem{notation}[theorem]{Notation}
\newcommand{\F}{\mathbb{F}}
\newcommand{\Z}{\mathbb{Z}}
\newcommand{\Q}{\mathbb{Q}}
\newcommand{\bir}{{\rm Bir}}
\title{Polynomial parametrizations of length $4$ B\"uchi sequences}
\date{}
\author{Xavier Vidaux\\Universidad de Concepci\'on, Chile}
\begin{document}

\maketitle

\begin{abstract}
B\"uchi's problem asks whether there exists a positive integer $M$ such that any sequence $(x_n)$ of at least $M$ integers, whose second difference of squares is the constant sequence $(2)$, satisifies $x_n^2=(x+n)^2$ for some $x\in\Z$. A positive answer to B\"uchi's problem would imply that there is no algorithm to decide whether or not an arbitrary system of quadratic diagonal forms over $\Z$ can represent an arbitrary given vector of integers. We give explicitly an infinite family of polynomial parametrizations of non-trivial length $4$ B\"uchi sequences of integers. In turn, these parametrizations give an explicit infinite family of curves (which we suspect to be hyperelliptic) with the following property: any integral point on one of these curves would give a length $5$ non-trivial B\"uchi sequence of integers (it is not known whether any such sequence exists). 
\end{abstract}

\tableofcontents

\section{Introduction}

Since the projective surface with affine equations
\begin{equation}\label{buc4}
x_4^2-2x_3^2+x_2^2=x_3^2-2x_2^2+x_1^2=2 
\end{equation}
is a Segre surface (a Del Pezzo surface of degree $4$), its $\Q$-rational points can be parametrized. Let $X_4$ be the (affine) variety defined by Equations \eqref{buc4} (\emph{B\"uchi equations}). Having B\"uchi's problem in mind, we would like to characterize the set of integer points on $X_4$ (actually a cofinite set of the set of integer points would be enough). There exists extensive literature about rational surfaces, but there seem to be few results about polynomial parametrizations over $\Z$. 

Let $A$ be a commutative ring with unit. A sequence $(x_1,\dots,x_\ell)$ of elements of $A$ is called a \emph{B\"uchi sequence over $A$} if its second difference is the constant sequence $(2)$\,: for each $i\in\{1,\dots,\ell-2\}$ it satisfies
$$
x_i^2-2x_{i+1}^2+x_{i+2}^2=2.
$$
If $A$ has characteristic $0$, we will call \emph{trivial B\"uchi sequence} any sequence satisfying\,: there exists $x\in A$ such that $x_i^2=(x+i)^2$ for all $i=1,\dots,\ell$. B\"uchi's problem over $A$ asks whether there exists an integer $M$ such that no non-trivial B\"uchi sequence of length at least $M$ exists. If such an $M$ exists, we call $M(A)$ the smallest one, and $M_f(A)$ the least $M$ such that there are only finitely many non-trivial B\"uchi sequences of length $M$. Hence, if one proves that $M_f(A)$ exists, then one obtains automatically a positive answer to B\"uchi's problem \emph{for some} $M\ge M_f(A)$.

If $A$ is a ring of functions, then it is required that at least one $x_i$ is transcendental over the prime subring of $A$ (also in the case where $A$ has positive characteristic, the concept of trivial sequence has to be adapted - see for example \cite{PastenPheidasVidaux}). 

B\"uchi got interested in this problem when he realized that from a positive answer to it he would be able to prove that there is no algorithm to decide whether or not an arbitrary system of quadratic diagonal forms can represent an arbitrary given vector of integers (which, if true, would be one of the strongest forms of the negative answer to Hilbert's tenth problem). 

B\"uchi's problem remains open for the integers, but P. Vojta \cite{Vojta} showed that $M_f(\Q)$ would be $8$ (actually the proof goes through for any number field) if Bombieri conjecture would be true for surfaces. It is striking that even though we cannot prove that B\"uchi's problem has a positive answer, no non-trivial B\"uchi sequence of length even just $5$ over $\Z$ is known to exist. Actually B\"uchi had conjectured that $M(\Z)=5$. 

B\"uchi's problem is known to have a positive answer over many usual rings of functions\,: fields of complex and $p$-adic complex meromorphic functions (\cite{Vojta}, \cite{Pasten3}), function fields in characteristic zero or large enough (\cite{Vojta},\cite{PheidasVidaux2},\cite{PheidasVidaux3},\cite{ShlapentokhVidaux}). Most of these results have been generalized to stronger forms in \cite{Pasten} and \cite{Pasten3}. A higher power version of B\"uchi's problem was introduced in \cite{PheidasVidaux1} (looking at $k$-th difference of $k$-th powers), but very few results have been obtained so far\,: it has a positive answer for any power over finite fields $\F_p$ - see \cite{Pasten2} - and for cubes over rings of polynomials - see \cite{PheidasVidaux4}. We refer to the surveys \cite{PastenPheidasVidaux} for results in these directions. 

In \cite{Allison}, \cite{Bremner} and \cite{BrowkinBrzezinski} one can find results about analogues of B\"uchi's problem where the constant sequence $(2)$ is replaced by another constant sequence. 

B\"uchi sequences of length $3$ are not difficult to characterize over $\Q$, and with some divisibility conditions one obtains a complete characterization of sequences over $\Z$ - they are infinitely many - see \cite[Theorem 2.1]{Hensley2} or \cite[Section 7]{PastenPheidasVidaux}. We also know a characterization over $\Z$ that does not require any divisibility condition (i.e.\: without any reference to $\Q$) - see \cite{SaezVidaux}. 

Already Hensley \cite{Hensley2} knew that there exist infinitely many length $4$ B\"uchi sequences of integers. Obtaining a ``good'' characterization for (a cofinite subset of the set of all) length $4$ sequences of integers could be a key step for solving B\"uchi's problem\,: proving that no sequences of length $4$ (but finitely many) can be extended to length $5$ could then be quite easier, and would prove that $M_f(\Z)=5$. 

This work presents an effort to characterize all but finitely many B\"uchi sequences of length $4$ over the integers. The idea comes from an unpublished paper by D. Hensley \cite{Hensley2} from the early eighties, where a polynomial parametrization of degree $3$ for length $4$ integer sequences is described, and from a paper by R. G. E. Pinch \cite{Pinch} from 1993 where he lists many length $4$ non-trivial B\"uchi sequences and shows that none of them can be extended to length $5$ sequences. 

In Section \ref{bir} we give explicitly a birational map $\zeta$ on $X_4$, of infinite order, with the following property (proved in Section \ref{xi})\,: 

\begin{theorem}
For any $t\in\Z$ and any non-negative integer $n$, the $n$-th iterate
$$
\zeta^{(n)}(t,t+1,t+2,t+3)
$$ 
has the form 
$$
\xi(n,t)=(\xi_{1}(n,t),\xi_{2}(n,t),\xi_{3}(n,t),\xi_{4}(n,t))
$$
where each $\xi_i(n,t)$ is a polynomial in $t$ of degree $2n+1$. 
\end{theorem}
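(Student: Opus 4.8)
The plan is to proceed by induction on $n$. The base case $n=0$ is immediate: $\xi(0,t)=(t,t+1,t+2,t+3)$, each coordinate being a polynomial in $t$ of degree $1=2\cdot0+1$, and this point lies on $X_4$ since it satisfies \eqref{buc4}. For the inductive step, assume $\zeta^{(n)}(t,t+1,t+2,t+3)=\xi(n,t)$ with each $\xi_i(n,t)\in\Z[t]$ of degree exactly $2n+1$ and with $\xi(n,t)$ a point of $X_4$ (this last property is automatic, since $\zeta$ maps $X_4$ to itself). Writing the birational map of Section~\ref{bir} in coordinates as $\zeta=(\zeta_1,\zeta_2,\zeta_3,\zeta_4)$ with $\zeta_i=P_i/Q_i$ for suitable polynomials $P_i,Q_i$, we get $\xi_i(n+1,t)=P_i(\xi(n,t))/Q_i(\xi(n,t))$, which a priori is only a rational function of $t$. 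Two things must then be verified: that this rational function is in fact a polynomial, and that its degree in $t$ is $2n+3$.

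The polynomiality is the crux, and the step I expect to be the main obstacle. One cannot get away with a generic argument, because a birational self-map of $X_4$ genuinely has indeterminacy points, and a priori the orbit of the line $\{(t,t+1,t+2,t+3)\}$ could meet the indeterminacy locus. What rescues the situation is that $\xi(n,t)$ satisfies the two B\"uchi equations \eqref{buc4}: these yield nontrivial polynomial identities among $\xi_1(n,t),\dots,\xi_4(n,t)$, and, together with the concrete shape of the denominators $Q_i$ read off from Section~\ref{bir}, they should force $Q_i(\xi(n,t))$ to divide $P_i(\xi(n,t))$ in $\Z[t]$. Making this precise amounts to rewriting each $\zeta_i$ modulo the equations of $X_4$ so that no denominator survives on the component under consideration, i.e.\ to exhibiting explicit cofactors; this is the computation I would set up most carefully, probably after first working out the cases $n=1$ and $n=2$ by hand to see exactly which factors cancel.

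Once polynomiality is in hand, the degree count is bookkeeping, but it is cleanest to fold it into a strengthened induction hypothesis that also carries the exact leading coefficients of the $\xi_i(n,t)$ (and, if needed, the leading behaviour of the combinations of the $\xi_i$ that occur inside $P_i$ and $Q_i$). The top-degree terms of $P_i(\xi(n,t))$ and $Q_i(\xi(n,t))$ are then determined by those leading coefficients, and subtracting degrees gives $2n+3$ provided a certain leading coefficient does not vanish; that non-vanishing is precisely what the strengthened hypothesis guarantees, and it propagates because the leading coefficients of $\xi(n+1,t)$ are explicit polynomial expressions in those of $\xi(n,t)$. The clause ``for any $t\in\Z$'' then requires no separate argument: once each $\xi_i(n,t)$ is known to be a genuine polynomial (hence pole-free) with integer coefficients, its value at an integer $t$ is an integer, so $\zeta^{(n)}$ really is defined at $(t,t+1,t+2,t+3)$ and returns an integer point of $X_4$. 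An alternative route to the whole theorem would be to guess closed forms for the $\xi_i(n,t)$ (one expects Chebyshev- or Pell-type polynomials in an auxiliary quantity, or a three-term recursion $\xi(n+1,t)=q(t)\,\xi(n,t)-\xi(n-1,t)+\cdots$ with $q$ quadratic in $t$) and then verify directly both that they satisfy \eqref{buc4} and that $\xi(n+1,t)=\zeta(\xi(n,t))$; this sidesteps the cancellation issue entirely, at the cost of having to find the right ansatz.
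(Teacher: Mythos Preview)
Your ``alternative route'' is exactly what the paper does. It gives an explicit closed form for $\xi(n,t)$ in terms of $\alpha=\sqrt{(t+1)(t+2)(t+3)(t+4)}$ and $\beta=t^2+5t+5+\alpha$, checks (by computer) that this closed form satisfies the three-term recurrence
\[
\xi(n+2,t)=(2t^{2}+10t+10)\,\xi(n+1,t)-\xi(n,t)
\]
with the stated initial values $\xi(0,t)$ and $\xi(1,t)$, and then verifies the single identity $\zeta(\xi(n,t))=\xi(n+1,t)$ symbolically using the closed form. The polynomiality and the degree $2n+1$ are then immediate consequences of the recurrence and the initial data, by a one-line induction. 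So the ansatz you predicted (Pell-type, three-term recursion with quadratic $q$) is precisely the one that works, and the paper simply writes it down and checks it.

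Your main proposed route, by contrast, has the gap you yourself flag but do not close. To show that $Q_i(\xi(n,t))$ divides $P_i(\xi(n,t))$ in $\Z[t]$ for \emph{all} $n$, you would need some uniform structural information about $\xi(n,t)$ beyond its degree and leading coefficient; the two B\"uchi relations alone are not enough, since $\zeta$ genuinely has denominators on $X_4$ (here $q=(b-c)^2(a-2b+c)$ after the twist by $\tau\mu_{14}$), and a generic $X_4$-point does not see them cancel. In practice the only way to supply that structural information is to know the closed form or the recurrence, at which point you are back to the paper's approach. So the ``exhibit explicit cofactors'' step is not a detail to be filled in later; it is the whole problem, and the paper's solution is to bypass it via the explicit $\xi$.
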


The polynomials $\xi_i(n,t)\in\Z[t]$ are given explicitly in Section \ref{xi}\,: they satisfy the following linear second order recurrence relation\,: for all $n\geq0$ 
$$
\xi(n+2,t)=f(t)\xi(n+1,t)-\xi(n,t),
$$
where $f(t)=2t^{2}+10t+10$, and with initial values
$$
\begin{aligned}
\xi(0,t)&=(t+1,t+2,t+3,t+4)\\
\xi(1,t)&=(2t^3+12t^2+19t+6,2t^3+14t^2+31t+23,2t^3+16t^2+41t+32,2t^3+18t^2+49t+39).
\end{aligned}
$$

In other words, there are infinitely many non-trivial parametrizations of B\"uchi sequences of length $4$ over the integers (one can prove that they are \emph{essentially} distinct, as they give rise to distinct sequences). Iterating $\zeta$ on known rational parametrizations over $\Q$ gives rise to infinitely many rational parametrizations (points of $X_4(\Q(t))$). 

The parametrization $\xi(1,t)$ is actually the one that D. Hensley \cite{Hensley2} had found in the early eighties, and no other polynomial parametrization seemed to be known to this day. 

In Section \ref{other}, we present two more polynomial parametrizations over $\Z$, of degree $4$, and one polynomial parametrization over $\Q$, also of degree $4$. Then we give a non-exhaustive  list of rational parametrizations which are not polynomial parametrizations. Computationally, it seems that there are no other polynomial parametrizations than the ones we already found, but we cannot prove it. 

Also we cannot prove the following, which seems to be true computationally\,: none of these parametrizations can represent an integer solution that extends to a length $5$ B\"uchi sequence. Indeed, consider for example the sequence $\xi(1,t)=(x_1(t),x_2(t),x_3(t),x_4(t))$ given above. Asking for this sequence to extend, for some fixed integer $t$, to a length $5$ sequence is asking whether either $2x_4^2-x_3^2+2$ (extension to the right) or $2x_1^2-x_2^2+2$ (extension to the left) is a square. Namely: do one of the following curves
$$
y^2=4t^6+80t^5+620t^4+2400t^3+4905t^2+5020t+2020
$$
or 
$$
y^2=4t^6+40t^5+120t^4-595t^2-970t-455
$$
have an integer point? 

In Section \ref{list}, we list all integer solutions that we found and that we were not able to \emph{parametrize} (i.e.: they seem not to belong to the image of a polynomial parametrization). With first term at most $1052749$, they are $121$ (counting only the strictly increasing sequences of positive integers) and we do not know whether or not we are missing finitely many. From the figure at the end of the section, it \emph{seems} clear that the quantity of points that we are ``missing'' is decreasing exponentially with respect to the size of the points. None of these (non-parametrized) points can extend to a length $5$ solution, as is easily verified with a computer software. 

The symbol $\dagger$ in the text will mean that we are using a computer software for the formal computation. We have used exclusively the open source software Xcas 0.8.6 and 0.9.0 for all our computations: \\
Giac/Xcas, Bernard Parisse et RenŽe De Graeve, version 0.8.6 (2010)\\
http://www-fourier.ujf-grenoble.fr/$\sim$parisse/giac\_fr.html\\




\section{Some birational maps on $X_4$}\label{bir}

\begin{notation}
\begin{enumerate}
\item Denote by $\bir(X_4)$ the group of birational maps on $X_4$.
\item Let $\tau$ and $\mu_i$, $i=1,2,3,4$, denote the following automorphisms of $X_4$\,:
$$
\mu_1(a,b,c,d)=(-a,b,c,d)\qquad\mu_2(a,b,c,d)=(a,-b,c,d)
$$
$$
\mu_3(a,b,c,d)=(a,b,-c,d)\qquad\mu_4(a,b,c,d)=(a,b,c,-d)
$$
and
$$
\tau(a,b,c,d)=(d,c,b,a).
$$
Observe that each $\mu_i$ is an odd function.
\item We will call \emph{trivial involution on $X_4$} any map from the subgroup $\Gamma_1$ of $\bir(X_4)$ generated by the set $\{\mu_1,\mu_2,\mu_3,\mu_4,\tau\}$. 
\item Write $\Gamma_0$ for the group generated by $\{\mu_1,\mu_2,\mu_3,\mu_4\}$. 
\item Write $\mu_{ij}=\mu_i\mu_j$ and $\mu_{ijk}=\mu_i\mu_j\mu_k$ for any $i,j,k\in\{1,2,3,4\}$.
\end{enumerate}
\end{notation}

\begin{remark}
\begin{enumerate}
\item For all $i\ne j$ we have $\mu_i\mu_j=\mu_j\mu_i$, hence $\Gamma_0$ is isomorphic to $(\Z_2)^4$. 
\item We have $\tau\mu_1=\mu_4\tau$ and $\tau\mu_2=\mu_3\tau$.
\item We have $\tau\mu_{14}=\mu_{14}\tau$ and $\tau\mu_{23}=\mu_{23}\tau$.
\item For each $i$, $\tau\mu_i$ has order $4$. 
\end{enumerate}
\end{remark}

\begin{lemma}
For all $i$, we have $\tau\mu_i\tau=\mu_{\sigma(i)}$, where $\sigma$ stands for the permutation $(1\,\,4)(2\,\,3)\in S_4$. Hence the group $H$ is normal in $\Gamma_1$ and 
the group $\Gamma_1$ is a semi-direct product $\Gamma_0\rtimes <\tau>$.
\end{lemma}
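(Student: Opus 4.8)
The plan is to verify the conjugation formula by a one-line computation on a general point, then to read off normality from the fact that conjugation by $\tau$ permutes a generating set of $\Gamma_0$, and finally to assemble the semidirect-product decomposition from the standard internal criterion. First I would note that $\tau$ is an involution, $\tau^{2}=\Id$, so $\tau^{-1}=\tau$ and $\tau\mu_i\tau$ is genuinely the conjugate $\tau\mu_i\tau^{-1}$. Evaluating on an arbitrary point $(a,b,c,d)\in X_4$,
$$
\tau\mu_1\tau(a,b,c,d)=\tau\mu_1(d,c,b,a)=\tau(-d,c,b,a)=(a,b,c,-d)=\mu_4(a,b,c,d),
$$
and the cases $i=2,3,4$ are identical after relabeling, so $\tau\mu_i\tau^{-1}=\mu_{\sigma(i)}$ with $\sigma=(1\,4)(2\,3)$. (This is just the relations $\tau\mu_1=\mu_4\tau$, $\tau\mu_2=\mu_3\tau$ of the preceding Remark, rewritten.)

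For normality, recall $\Gamma_0=\langle\mu_1,\mu_2,\mu_3,\mu_4\rangle$ and $\Gamma_1=\langle\mu_1,\mu_2,\mu_3,\mu_4,\tau\rangle$. Conjugation by each $\mu_j$ fixes $\Gamma_0$ since $\Gamma_0$ is abelian, and conjugation by $\tau$ carries each generator $\mu_i$ to the generator $\mu_{\sigma(i)}$ by the formula just established; hence conjugation by any generator of $\Gamma_1$ maps $\Gamma_0$ into $\Gamma_0$, and therefore $\Gamma_0\trianglelefteq\Gamma_1$.

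It then remains to check the three conditions for $\Gamma_1$ to be the internal semidirect product $\Gamma_0\rtimes\langle\tau\rangle$. Normality is done. Because $\Gamma_0$ is normal and $\Gamma_1=\langle\Gamma_0,\tau\rangle$, every element of $\Gamma_1$ can be brought to the form $g\tau^{k}$ with $g\in\Gamma_0$ by moving each $\tau$ to the right via $\tau g=(\tau g\tau^{-1})\tau$ and then reducing with $\tau^{2}=\Id$, so $\Gamma_1=\Gamma_0\langle\tau\rangle$. Finally $\Gamma_0\cap\langle\tau\rangle=\{\Id\}$: every element of $\Gamma_0$ merely changes signs of coordinates, hence maps each coordinate axis to itself, whereas $\tau$ swaps the first and fourth coordinates, so $\tau\notin\Gamma_0$ (and $\langle\tau\rangle=\{\Id,\tau\}$ as $\tau^{2}=\Id$). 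This yields $\Gamma_1=\Gamma_0\rtimes\langle\tau\rangle$. The argument is entirely elementary; the only steps needing a moment's attention are the identification $\tau^{-1}=\tau$ and the verification $\tau\notin\Gamma_0$ (which is exactly what prevents the decomposition from degenerating), and I expect no real obstacle.
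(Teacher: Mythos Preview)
Your proof is correct and follows the same approach as the paper, which simply says ``This is clear from the above remarks.'' You have merely unpacked those remarks---you even note explicitly that the conjugation formula is the relation $\tau\mu_1=\mu_4\tau$, $\tau\mu_2=\mu_3\tau$ from the preceding Remark rewritten---and then spelled out the standard internal semidirect-product verification (normality, product, trivial intersection) that the paper leaves implicit.
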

\begin{proof}
This is clear from the above remarks. 
\end{proof}

Next we define a rational map $\varphi$ on $X_4$ that will turn out to be an involution. 

\begin{notation}\label{SMALL}
\begin{enumerate}
\item We will consider the map $\varphi\colon F^4\rightarrow F^4$ defined by
$$
(\varphi_1,\varphi_2,\varphi_3,\varphi_4)=\left(\frac{p_1}{q},\frac{p_2}{q},\frac{p_3}{q},\frac{p_4}{q}\right)
$$
where
$$
q(a,b,c,d)=(b-c)^2(a-2b+c)
$$
\begin{multline}
p_1(a,b,c,d)=-2 a b^3+a b^2 c+2 a b^2 d+4 a b c^2-5 a b c d+a b-2 a c^3+2 a c^2 d+a c-a d+3 b^4
\\\notag 
-2 b^3 c-3 b^3 d-6 b^2 c^2+8 b^2 c d+b^2+4 b c^3-4 b c^2 d-5 b c+b d+c^2+c d-2
\end{multline}
\begin{multline}
p_2(a,b,c,d)=-2 a b^2 c+5 a b c^2-2 a b c d+2 a b-2 a c^3+a c^2 d\\\notag 
-a d+3 b^3 c-8 b^2 c^2+3 b^2 c d-2 b^2+4 b c^3-2 b c^2 d-b c+2 b d-2
\end{multline}
\begin{multline}
p_3(a,b,c,d)=-2 a b^3+5 a b^2 c-2 a b^2 d-2 a b c^2+a b c d\\\notag 
+3 a b-a c-a d+3 b^4-8 b^3 c+3 b^3 d+4 b^2 c^2-2 b^2 c d-3 b^2-b c+3 b d+c^2-c d-2
\end{multline}
\begin{multline}
p_4(a,b,c,d)=-3 a b^3+8 a b^2 c-3 a b^2 d-4 a b c^2+2 a b c d+4 a b-2 a c-a d+4 b^4-10 b^3 c+4 b^3 d\\\notag
+2 b^2 c^2-2 b^2 c d-4 b^2+5 b c^3-2 b c^2 d-b c+4 b d-2 c^4+c^3 d+2 c^2-2 c d-2.
\end{multline}
Observe that $\varphi$ is an odd function (as $q$ is an odd function and $p_i$ are even functions).
\item Write $\zeta=\varphi\tau\mu_{14}$.
\end{enumerate}
\end{notation}

\begin{lemma}
The map $\varphi$ is a rational map on $X_4$. 
\end{lemma}
\begin{proof}
One needs to replace formally ($\dagger$) $a^2$ by $2b^2-c^2+2$ and $d^2$ by $2c^2-b^2+2$ in the expressions $(\varphi_1^2-2\varphi_2^2+\varphi_3^2)(a,b,c,d)$ and $(\varphi_2^2-2\varphi_3^2+\varphi_4^2)(a,b,c,d)$.
\end{proof}


\begin{lemma}\label{LemInv}
The map $\varphi$ is an involution. 
\end{lemma}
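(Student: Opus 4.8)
The cleanest strategy is to verify directly that $\varphi\circ\varphi$ agrees with the identity as a rational map on $X_4$. Since $\varphi$ is given by explicit polynomial formulas $\varphi_i = p_i/q$, the composition $\varphi^{(2)}$ is again a rational map whose numerator and denominator are explicit (large) polynomials in $a,b,c,d$. The plan is: first compute $\varphi^{(2)}(a,b,c,d)$ formally, obtaining each coordinate as a quotient $P_i(a,b,c,d)/Q(a,b,c,d)$; then, exactly as in the proof that $\varphi$ maps $X_4$ to $X_4$, reduce modulo the defining relations of $X_4$ by substituting $a^2 \mapsto 2b^2-c^2+2$ and $d^2 \mapsto 2c^2-b^2+2$ wherever these appear; and finally check that after this reduction one has $P_i = a_i \cdot Q$ with $(a_1,a_2,a_3,a_4)=(a,b,c,d)$, i.e.\ $\varphi^{(2)} = \mathrm{id}$ on $X_4$. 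This is a finite formal computation ($\dagger$).

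A small subtlety is that $\varphi$ and $\varphi^{(2)}$ are only defined away from the zero locus of $q$, so the identity $\varphi^{(2)}=\Id$ holds on a dense open subset of $X_4$ and hence as an equation of birational maps; this is all that is needed, and it is also why the word ``involution'' here is understood in $\bir(X_4)$. One should also note for safety that $\varphi$ does not collapse $X_4$ into the indeterminacy locus of $\varphi$ itself — equivalently, $q\circ\varphi$ is not identically zero on $X_4$ — which will be visible from the computation since the denominator of $\varphi^{(2)}$ is a power of $q$ times $(q\circ\varphi)$-type factors and does not vanish identically.

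An alternative, more structural route would be to exhibit $\varphi$ as a composition of a trivial involution with a projection-type involution (a ``Geiser''- or ``Bertini''-style involution attached to the degree-$4$ Del Pezzo surface $X_4$), from which the involutivity is automatic. However, extracting such a description from the given formulas looks more laborious than simply squaring them, so I would keep the direct computation as the main line of proof and mention the geometric interpretation only as a remark.

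The main obstacle is purely the size of the intermediate expressions: composing two rational maps of multidegree roughly $(4,4,4)$ in four variables and then reducing modulo the two quadratic relations produces polynomials with many terms, so without computer algebra the bookkeeping is unpleasant. With a CAS ($\dagger$) the verification is routine; the only genuine care needed is to perform the reduction modulo the ideal of $X_4$ consistently (and to track that no spurious factor of $q$ is cancelled prematurely, which would hide a pole of $\varphi$ on $X_4$).
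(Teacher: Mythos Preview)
Your approach is correct and is essentially the paper's own proof: compose $\varphi$ with itself, reduce modulo the two B\"uchi relations, and verify with a CAS that each coordinate simplifies to the corresponding input variable (the paper eliminates $x_3^2$ and $x_4^2$ rather than $a^2$ and $d^2$, but this is an inessential choice of normal form). The paper adds one observation you might find useful: the hand-checkable identities $(\varphi_1-2\varphi_2+\varphi_3)(a,b,c,d)=a-2b+c$ and $(\varphi_2-2\varphi_3+\varphi_4)(a,b,c,d)=b-2c+d$ make a computer-free verification tractable.
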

\begin{proof}
For each $i$, after substituting formally ($\dagger$) $x_4^2$ by $2x_3^2-x_2^2+2$ and $x_3^2$ by $2x_2^2-x_1^2+2$ in $\varphi_i(\varphi(x_1,x_2,x_3,x_4))$ and doing the obvious simplifications ($\dagger$), one obtains $x_i$. Note that it is not hard to prove this lemma without the help of a computer, by using the fact (verifiable by hand) that 
$$
(\varphi_1-2\varphi_2+\varphi_3)(a,b,c,d)=a-2b+c
$$
and 
$$
(\varphi_2-2\varphi_3+\varphi_4)(a,b,c,d)=b-2c+d.
$$

\end{proof}

Observe that since $\varphi$ is birational, also $\zeta=\varphi\tau\mu_{14}$ is birational. 

\begin{notation}
Write $\Gamma$ for the subgroup of $\bir(X_4)$ generated by $\Gamma_1$ and $\varphi$.
\end{notation}

Unfortunately, we do not know the presentation of $\Gamma$, but the next two lemmas give us some useful information about it. 

\begin{lemma}
We have $\tau\varphi=\varphi\tau$ and $\tau\zeta=\zeta\tau$. 
\end{lemma}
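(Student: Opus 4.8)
The statement has two parts, and the second will drop out formally from the first once a relation already recorded in the Remark above is used, so the plan is to establish $\tau\varphi=\varphi\tau$ first and then obtain $\tau\zeta=\zeta\tau$ by a short computation in $\bir(X_4)$.

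For the first identity, observe that since $\tau(a,b,c,d)=(d,c,b,a)$ merely reverses the coordinates and $\tau^2=\Id$, unwinding the compositions shows that $\tau\varphi=\varphi\tau$ as rational maps on $X_4$ is equivalent to the four scalar identities $\varphi_1\circ\tau=\varphi_4$, $\varphi_2\circ\tau=\varphi_3$, $\varphi_3\circ\tau=\varphi_2$, $\varphi_4\circ\tau=\varphi_1$ of rational functions on $X_4$, and that composing the first two on the right with $\tau$ (using $\tau^2=\Id$) gives the last two. So it is enough to verify
$$
\varphi_1(d,c,b,a)=\varphi_4(a,b,c,d)\qquad\text{and}\qquad\varphi_2(d,c,b,a)=\varphi_3(a,b,c,d).
$$
Clearing the common denominator $q$ and using $q(a,b,c,d)=(b-c)^2(a-2b+c)$ and $q(d,c,b,a)=(b-c)^2(b-2c+d)$, these reduce (after dropping the common factor $(b-c)^2$) to the two polynomial congruences
$$
p_1(d,c,b,a)\,(a-2b+c)\equiv p_4(a,b,c,d)\,(b-2c+d),\qquad
p_2(d,c,b,a)\,(a-2b+c)\equiv p_3(a,b,c,d)\,(b-2c+d)
$$
modulo the ideal $I=\bigl(a^2-2b^2+c^2-2,\ b^2-2c^2+d^2-2\bigr)$ of $X_4$.

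I would verify these two congruences by a direct formal computation ($\dagger$): substitute $a^2\mapsto 2b^2-c^2+2$ and $d^2\mapsto 2c^2-b^2+2$ on both sides and simplify, just as in the proofs of the two preceding lemmas. A cheap consistency check is also available by hand: from $(\varphi_1-2\varphi_2+\varphi_3)(a,b,c,d)=a-2b+c$ and $(\varphi_2-2\varphi_3+\varphi_4)(a,b,c,d)=b-2c+d$ (noted in the proof of Lemma~\ref{LemInv}), substituting $(a,b,c,d)\mapsto(d,c,b,a)$ shows that $\tau\varphi\tau$ satisfies these same two identities, so $\tau\varphi\tau$ and $\varphi$ preserve the same pair of linear forms $a-2b+c$ and $b-2c+d$; this is consistent with, though does not by itself force, $\tau\varphi\tau=\varphi$.

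Finally, granting $\tau\varphi=\varphi\tau$ and using $\tau^2=\Id$ together with $\tau\mu_{14}=\mu_{14}\tau$ (from the Remark above), one computes
$$
\tau\zeta=\tau\varphi\tau\mu_{14}=\varphi\tau\tau\mu_{14}=\varphi\mu_{14}
\qquad\text{and}\qquad
\zeta\tau=\varphi\tau\mu_{14}\tau=\varphi\tau\tau\mu_{14}=\varphi\mu_{14},
$$
so that $\tau\zeta=\zeta\tau$. The one step that is not pure bookkeeping is the verification of the two displayed polynomial congruences; since the $p_i$ and $q$ are explicit polynomials of degree at most $4$ in four variables, this is a finite and entirely mechanical reduction modulo a two-generator ideal, and I anticipate no conceptual difficulty — here the work is the computation, not the idea, and once the first identity is in hand the second is immediate.
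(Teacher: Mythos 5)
Your proposal is correct and follows essentially the same route as the paper: the identity $\tau\varphi=\varphi\tau$ is checked by a formal computation modulo the defining equations of $X_4$ (you merely spell out more explicitly that this amounts to two polynomial congruences after clearing the common denominator), and $\tau\zeta=\zeta\tau$ then follows by the same bookkeeping with $\tau^2=\Id$ and $\tau\mu_{14}=\mu_{14}\tau$ that the paper uses.
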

\begin{proof}
Verifying that $\tau\varphi-\varphi\tau=0$ needs replacing $a^2$ by $2b^2-c^2+2$ everywhere it occurs in the expression ($\dagger$). Recalling the definition of $\zeta=\varphi\tau\mu_{14}$, we have 
$$
\tau\zeta\tau=\tau(\varphi\tau\mu_{14})\tau=\varphi\mu_{14}\tau=\varphi\tau\mu_{14}=\zeta.
$$
\end{proof}

\begin{lemma}
The map $\zeta$ has infinite order. 
\end{lemma}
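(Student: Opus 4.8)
The plan is to prove that $\zeta$ has infinite order by exhibiting a point of $X_4$ with infinite $\zeta$-orbit. The natural candidate is a point coming from a trivial B\"uchi sequence: for any $t$ the point $P_t:=(t,t+1,t+2,t+3)$ lies on $X_4$, since $(t+i)^2-2(t+i+1)^2+(t+i+2)^2=2$ for $i=1,2$. It will in fact be enough to use the single integral point $P_0=(0,1,2,3)\in X_4(\Z)$, which has the extra advantage of avoiding the indeterminacy locus of $\varphi$ (note $q(a,b,c,d)=(b-c)^2(a-2b+c)$ does not vanish along the $\zeta$-orbit of $P_0$, which is in any case the orbit computed by the polynomial parametrization below).

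First I would compute $\zeta(P_t)$ directly from $\zeta=\varphi\tau\mu_{14}$ (with $\mu_{14}=\mu_1\mu_4$): applying $\mu_{14}$ gives $(-t,t+1,t+2,-t-3)$, applying $\tau$ gives $(-t-3,t+2,t+1,-t)$, and applying $\varphi$ — a formal substitution into the coordinates of $\varphi$ from Notation~\ref{SMALL}, carried out with the computer ($\dagger$) — yields exactly the $4$-tuple $\xi(1,t)$ displayed in the Introduction, whose entries are cubics in $t$. More generally I would invoke the linear recurrence $\xi(n+2,t)=f(t)\xi(n+1,t)-\xi(n,t)$ with $f(t)=2t^{2}+10t+10$, established in Section~\ref{xi}, to describe all iterates: $\zeta^{(n)}(P_t)=\xi(n,t)$, and each entry of $\xi(n,t)$ is a polynomial in $t$ of degree $2n+1$ (this is the content of the Theorem in the Introduction, proved in Section~\ref{xi}). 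There is no circularity here: that recurrence is proved by a direct induction in which one computes $\varphi\tau\mu_{14}$ on the general form $\xi(n,t)$ and reduces modulo the B\"uchi equations \eqref{buc4}, and this computation does not use the present lemma.

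Now suppose, for contradiction, that $\zeta$ has finite order $k\ge 1$. Then $\zeta^{(k)}=\Id$, so for every $t$ the first coordinate of $\zeta^{(k)}(P_t)$ equals $t$, i.e.\ $\xi_1(k,t)=t$ as polynomials in $t$; this is impossible since $\deg_t\xi_1(k,t)=2k+1\ge 3$. Equivalently — and using only the recurrence, not the full degree statement — specialize at $t=0$: the first coordinates $a_n:=\xi_1(n,0)$ satisfy $a_0=1$, $a_1=6$ and $a_{n+2}=10\,a_{n+1}-a_n$, whence an immediate induction gives $1\le a_n<a_{n+1}$ for all $n$; thus the points $\zeta^{(n)}(P_0)$ are pairwise distinct and $\zeta$ cannot have finite order. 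This finishes the proof.

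The one step that needs genuine work is the input imported from Section~\ref{xi}, namely that the iterates $\zeta^{(n)}(P_t)$ really satisfy that quadratic-in-$t$ recurrence (equivalently, that the $n$-th iterate is given by polynomials of degree $2n+1$). A priori one cannot simply argue ``the degree grows under iteration,'' since the naive degree of a composition can collapse — as it does, for instance, for the involution $\varphi$ itself; the recurrence is exactly what rules this out and keeps the degree bookkeeping honest. Everything else is a short formal computation ($\dagger$) together with the elementary induction on $a_n$.
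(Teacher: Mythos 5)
Your proof is correct and takes essentially the same route as the paper: both exhibit an explicit point whose $\zeta$-orbit obeys the linear recurrence $u_{n+2}=10u_{n+1}-u_n$ and conclude by strict monotonicity — your specialization at $t=0$ reproduces exactly the paper's orbit $(1,2,3,4)\mapsto(6,23,32,39)\mapsto(59,228,317,386)\mapsto\cdots$, the only variation being that you import the polynomial parametrization $\xi(n,t)$ from Section~\ref{xi} (whose proof is indeed independent of this lemma, so there is no circularity) and offer the degree-$(2k+1)$ contradiction as an alternative. The one harmless slip is an off-by-one in the base point inherited from the Introduction: by Theorem~\ref{bigteo} one has $\zeta^{(n)}(t+1,t+2,t+3,t+4)=\xi(n,t)$, so your $P_t=(t,t+1,t+2,t+3)$ actually yields $\xi(n,t-1)$, which changes nothing in the argument.
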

\begin{proof}
The sequence $(u_n)$ defined for $n\geq0$ by
\begin{equation}\label{EqSol1}
u_n=\frac{(5+\sqrt{24})^n-(5-\sqrt{24})^n}{2\sqrt{24}}
\end{equation}
is strictly increasing (since $(5+\sqrt{24})^n$ is strictly increasing and $0<5-\sqrt{24}<1$). Also it
satisfies ($\dagger$) the recurrence relation 
$$
u_{n+2}=10u_{n+1}-u_{n},\qquad u_0=0,\,\,\,u_1=1
$$ 
(hence it is a sequence of integers). Therefore, we have 
\begin{equation}\label{EqRec1}
0<u_{n+3}-u_{n+2}=10(u_{n+2}-u_{n+1})-(u_{n+1}-u_{n})
\end{equation}

Let us show that for any $n\ge1$, we have  
\begin{equation}\label{EqRecZeta1}
\zeta^{(n)}(1,2,3,4)=u_{n}(6,23,32,39)-u_{n-1}(1,2,3,4). 
\end{equation}
Since ($\dagger$)
$$
\zeta(1,2,3,4)=(6,23,32,39)\quad\textrm{and}\quad\zeta^{(2)}(1,2,3,4)=(59,228,317,386),
$$
we have  
$$
\zeta^{(2)}(1,2,3,4)=10(6,23,32,39)-(1,2,3,4)
$$
and Equation \eqref{EqRecZeta1} is true for $n=2$. Suppose that it is true up to $n$. We have 
$$
\begin{aligned}
\zeta^{(n+1)}(1,2,3,4)&=\zeta\circ\zeta^{(n)}(1,2,3,4)\\
&=\zeta(u_{n}(6,23,32,39)-u_{n-1}(1,2,3,4))\\
&=u_{n+1}(6,23,32,39)-u_{n}(1,2,3,4),
\end{aligned}
$$
where the last equality can be verified with the use of a computer software. This finishes the induction. 

Since the second component of $(\zeta^{(n+1)}-\zeta^{(n)})(1,2,3,4)$ is
$$
\begin{aligned}
(23u_{n+2}-2u_{n+1})-(23u_{n+1}-2u_{n})&=23(u_{n+2}-u_{n+1})-2(u_{n+1}-u_{n})\\
&=3(u_{n+2}-u_{n+1})+2[10(u_{n+2}-u_{n+1})-(u_{n+1}-u_{n})]
\end{aligned}
$$
which is strictly positive by Equation \eqref{EqRec1}, this shows that $\zeta$ has infinite order. 
\end{proof}

Note that from the above proof we also obtain\,:
$$
\begin{aligned}
\zeta^{(n)}(1,2,3,4)&=u_{n}(6,23,32,39)-u_{n-1}(1,2,3,4)\\
&=(10u_{n-1}-u_{n-2})(6,23,32,39)-(10u_{n-2}-u_{n-1})(1,2,3,4)\\
&=10(u_{n-1}(6,23,32,39)-u_{n-2}(1,2,3,4))-(u_{n-2}(6,23,32,39)-u_{n-1}(1,2,3,4))\\
&=10\zeta^{(n-1)}(1,2,3,4)-\zeta^{(n-2)}(1,2,3,4)
\end{aligned}
$$
for all $n\geq2$.

\section{B\"uchi sequences of length $4$ over $\Z[t]$}\label{xi}

\begin{notation}
For all $n\geq0$, write $\xi(n,t)=(\xi_{1}(n,t),\xi_{2}(n,t),\xi_{3}(n,t),\xi_{4}(n,t))$ where 
$$
\xi_{1}(n,t)=\frac{(t^3+6t^2+9t+1+\alpha (t+1))\beta^n-
(t^3+6t^2+9t+1-\alpha (t+1))\bar\beta^n}{2\alpha}
$$
$$
\xi_{2}(n,t)=\frac{(t^3+7t^2+16t+13+\alpha (t+2))\beta^n-
(t^3+7t^2+16t+13-\alpha (t+2))\bar\beta^n}{2\alpha}
$$
$$
\xi_{3}(n,t)=\frac{(t^3+8t^2+21t+17+\alpha (t+3))\beta^n-
(t^3+8t^2+21t+17-\alpha (t+3))\bar\beta^n}{2\alpha}
$$
$$
\xi_{4}(n,t)=\frac{(t^3+9t^2+24t+19+\alpha (t+4))\beta^n-
(t^3+9t^2+24t+19-\alpha (t+4))\bar\beta^n}{2\alpha}
$$
$$
\alpha=\sqrt{(t+1)(t+2)(t+3)(t+4)}
$$
and 
$$
\beta=t^2+5t+5+\alpha,\qquad \bar\beta=t^2+5t+5-\alpha.
$$
\end{notation}

\begin{theorem}\label{bigteo}
We have\,: 
\begin{enumerate}
\item $\xi(0,t)=(t+1,t+2,t+3,t+4)$.\label{bigteon0}
\item $\xi(1,t)=(2t^3+12t^2+19t+6,2t^3+14t^2+31t+23,2t^3+16t^2+41t+32,2t^3+18t^2+49t+39)$.\label{bigteon1}
\item For all $n\geq0$ 
$$
\xi(n+2,t)=f(t)\xi(n+1,t)-\xi(n,t),
$$
where $f(t)=2t^{2}+10t+10$.\label{bigteorec}
\item For each $n\geq0$, $\xi(n,t)$ is a $4$-tuple of polynomials of degree $2n+1$ in the variable $t$. 
\item For all $n\geq0$ and $t\in\Z$, the sequence $\xi(n,t)$ is a B\"uchi sequence. 
\item For all $n\geq0$ we have 
$$
\zeta^{(n)}(t+1,t+2,t+3,t+4)=\xi(n,t).
$$
\end{enumerate}
\end{theorem}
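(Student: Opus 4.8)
The plan is to verify the six assertions essentially in the order stated, using the closed forms for the $\xi_i(n,t)$ together with the single nontrivial input, Lemma on $\tau\varphi=\varphi\tau$ being irrelevant here — what we really need is the explicit formula $\zeta(1,2,3,4)=(6,23,32,39)$ and its analogue with the variable $t$, i.e.\ $\zeta(t+1,t+2,t+3,t+4)=\xi(1,t)$, which is a single symbolic computation ($\dagger$) after substituting $a^2=2b^2-c^2+2$, etc. Items \eqref{bigteon0} and \eqref{bigteon1} are direct substitutions into the closed form: at $n=0$ the $\beta^n$ and $\bar\beta^n$ terms each reduce to $1$, the $t^3$-parts cancel, and $2\alpha(t+i)/(2\alpha)=t+i$; at $n=1$ one expands $(\cdots+\alpha(t+i))\beta-(\cdots-\alpha(t+i))\bar\beta$, uses $\beta+\bar\beta=2(t^2+5t+5)$ and $\beta-\bar\beta=2\alpha$ and $\alpha^2=(t+1)(t+2)(t+3)(t+4)$, and collects terms. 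The recurrence \eqref{bigteorec} is the cleanest part: since $\beta$ and $\bar\beta$ are the two roots of $X^2-f(t)X+1=0$ (because $\beta+\bar\beta=2t^2+10t+10=f(t)$ and $\beta\bar\beta=(t^2+5t+5)^2-\alpha^2$, which one checks equals $1$ by expanding), every sequence of the shape $(A\beta^n-B\bar\beta^n)/(2\alpha)$ satisfies $s_{n+2}=f(t)s_{n+1}-s_n$ termwise, with no reference to the particular $A,B$; hence each $\xi_i(n,t)$ does, and so does the $4$-tuple.

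For item $(4)$, the degree count, I would argue by induction on $n$ using the recurrence \eqref{bigteorec}, having first checked $n=0$ (degree $1$) and $n=1$ (degree $3$) from items \eqref{bigteon0}–\eqref{bigteon1}: multiplication by $f(t)$ raises degree by $2$, so $\deg\xi_i(n+1,t)=\deg\xi_i(n,t)+2$, giving $2n+1$ provided the leading term does not cancel against $-\xi_i(n,t)$ (whose degree is strictly smaller, $2n-1<2n+3$), so no cancellation is possible; one should also note in passing that each $\xi_i(n,t)$ is a genuine polynomial in $t$ — the apparent $\alpha$ in the denominator disappears because $(A\beta^n - B\bar\beta^n)$ is divisible by $\beta-\bar\beta=2\alpha$ in $\Z[t,\alpha]$ whenever $A\equiv B \pmod{2\alpha}$, which holds here since $A-B = 2\alpha(t+i)$. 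Item $(5)$, that each $\xi(n,t)$ is a B\"uchi sequence for every integer $t$, reduces to the polynomial identities $\xi_i(n,t)^2 - 2\xi_{i+1}(n,t)^2 + \xi_{i+2}(n,t)^2 = 2$ in $\Z[t]$ for $i=1,2$; I would prove these by induction on $n$ using \eqref{bigteorec}, writing $Q_i(n) = \xi_i(n,t)^2-2\xi_{i+1}(n,t)^2+\xi_{i+2}(n,t)^2$ and showing $Q_i(n+2)$ is determined by $Q_i(n+1)$, $Q_i(n)$ and a mixed term $\xi_i(n)\xi_i(n+1)-2\xi_{i+1}(n)\xi_{i+1}(n+1)+\xi_{i+2}(n)\xi_{i+2}(n+1)$, which one checks (via the recurrence) is itself constant in $n$; the base cases $n=0,1$ are finite checks. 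Alternatively, and more cheaply, once item $(6)$ is in hand, item $(5)$ follows because $\xi(0,t)=(t+1,t+2,t+3,t+4)$ is a (trivial) B\"uchi sequence over $\Z[t]$ and $\zeta$ maps $X_4$ to $X_4$, so every iterate lands on $X_4$, i.e.\ is a B\"uchi sequence.

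Item $(6)$ is the real content and the main obstacle, though the groundwork already laid makes it short. The proof is by induction on $n$. For $n=0$ it is the identity $\xi(0,t)=(t+1,t+2,t+3,t+4)$; for $n=1$ it is the single symbolic verification $\zeta(t+1,t+2,t+3,t+4)=\xi(1,t)$ ($\dagger$), carried out exactly as in the numerical case by computing $\varphi\tau\mu_{14}$ on $(t+1,t+2,t+3,t+4)$ and simplifying modulo the B\"uchi relations. For the inductive step, suppose $\zeta^{(k)}(t+1,\dots,t+4)=\xi(k,t)$ for $k\le n+1$; then
\[
\zeta^{(n+2)}(t+1,\dots,t+4) = \zeta\bigl(\xi(n+1,t)\bigr),
\]
and by \eqref{bigteorec} and the fact that $\zeta$, being birational on $X_4$, is ``affine-linear along the recurrence'' in exactly the sense exhibited numerically at the end of Section \ref{bir} — one has the termwise identity $\zeta\bigl(f(t)P - R\bigr) = f(t)\,\zeta(P) - \zeta(R)$ for points $P,R$ on $X_4$ lying on the relevant line, verified ($\dagger$) once and for all symbolically — we get $\zeta(\xi(n+1,t)) = f(t)\,\zeta(\xi(n,t))\cdots$ wait, more precisely we use that $\xi(n+1,t)$, $\xi(n,t)$, $\xi(n+2,t)$ satisfy the recurrence, so
\[
\zeta^{(n+2)} = \zeta\circ\zeta^{(n+1)} = \zeta(\xi(n+1,t)) = f(t)\,\xi(n+1,t) - \xi(n,t) = \xi(n+2,t),
\]
where the middle equality is the one that needs the symbolic check that $\zeta$ intertwines with the linear recurrence — this mirrors precisely the step ``the last equality can be verified with the use of a computer software'' in the proof that $\zeta$ has infinite order. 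The potential pitfall is that $\zeta$ is only birational, not everywhere defined, so one must check that none of the $\xi(n,t)$ hits the indeterminacy locus (where $q(a,b,c,d)=(b-c)^2(a-2b+c)=0$); here $(b-c)$ evaluated at $\xi(n,t)$ is $\xi_2(n,t)-\xi_3(n,t)$, a nonzero polynomial of positive degree (nonzero already at $n=0$, where it equals $-1$, and its leading behaviour is controlled by the recurrence), and similarly $a-2b+c = \xi_1-2\xi_2+\xi_3$, which is the constant $-1$ on the whole $\zeta$-orbit by the identity $(\varphi_1-2\varphi_2+\varphi_3)(a,b,c,d)=a-2b+c$ from the proof of Lemma \ref{LemInv} together with the corresponding invariance under $\tau\mu_{14}$; so the orbit stays in the domain of definition and the induction goes through.
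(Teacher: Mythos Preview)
Your treatment of items (1)--(5) is correct and in fact considerably more explicit than the paper's, which simply declares them ``easily verified $(\dagger)$'' and deduces (4) from (1)--(3) by the same induction you sketch. Your alternative route to (5) via (6) is also sound.

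The trouble is in item (6). The paper's inductive step is not the two-step recurrence argument you attempt; it is the \emph{single} symbolic verification
\[
\zeta(\xi(n,t)) = \xi(n+1,t),
\]
carried out once with the closed form (treating $\beta^n$ as a formal parameter, using $\beta\bar\beta=1$). This is what makes the induction go through cleanly: the identity to be checked does not depend on $n$ in any way beyond the exponent on $\beta$, and the closed form is precisely what lets you do this uniformly. Your version instead asserts an ``affine-linearity'' $\zeta(f(t)P-R)=f(t)\zeta(P)-\zeta(R)$, which is simply false as a statement about the birational map $\zeta$: the components of $\zeta$ are degree-$4$ rational functions, not linear. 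What is true, and what the infinite-order proof actually uses, is a much more specific identity: $\zeta(u\,\xi(1,t)-v\,\xi(0,t)) = (f(t)u-v)\,\xi(1,t)-u\,\xi(0,t)$ \emph{modulo the B\"uchi relations and the quadratic constraint $u^2-f(t)uv+v^2=1$} (this constraint is exactly what makes $u\,\xi(1,t)-v\,\xi(0,t)$ lie on $X_4$). That can indeed be checked symbolically once, but it is a heavier computation than the paper's direct check, and your write-up never isolates this as the identity to verify --- the ``wait'' mid-proof is a symptom of the argument not having settled.

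A smaller slip: in your indeterminacy analysis you evaluate $a-2b+c$ directly on $\xi(n,t)$, but $\zeta=\varphi\tau\mu_{14}$ applies $\tau\mu_{14}$ first, so the relevant denominator is $q(-\xi_4,\xi_3,\xi_2,-\xi_1)=(\xi_3-\xi_2)^2(-\xi_4-2\xi_3+\xi_2)$, not $(\xi_2-\xi_3)^2(\xi_1-2\xi_2+\xi_3)$. In the paper's approach this issue never arises, since the direct symbolic check of $\zeta(\xi(n,t))=\xi(n+1,t)$ produces a polynomial output and thereby confirms the denominator does not vanish identically.
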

\begin{proof}
Items 1, 2, 3 and 5 are easily verified $(\dagger)$. Item 4 then comes immediately from Items 1, 2 and 3 by induction on $n$. We prove Item 6 by induction on $n$. For $n=0$ it is given by Item 1. Suppose it is true up to $n$. One verifies $(\dagger)$ that 
$$
\xi(n+1,t)=\zeta(\xi(n,t))
$$
hence 
$$
\xi(n+1,t)=\zeta(\zeta^{(n)}(t+1,t+2,t+3))=\zeta^{(n+1)}(t+1,t+2,t+3,t+4),
$$
which finishes the induction. 

\end{proof}

In the next proposition, we solve the induction in order to find the coefficients of the polynomials $\xi_i(n,t)$.

\begin{proposition}
If $(u_n)$ is a sequence of integers satisfying $u_{n+2}=\alpha u_{n+1}-u_n$ for each $n\geq0$, then we have for each $n\ge1$
$$
u_{2n}=\sum_{k=0}^{n-1}(-1)^{n+k+1}\alpha^{2k}\left(\binom{n+k}{n-k-1}\alpha u_1-\binom{n+k-1}{n-k-1}u_0\right)
$$
and
$$
u_{2n-1}=\alpha^{2n-2}u_2+\sum_{k=0}^{n-2}(-1)^{n+k+1}\alpha^{2k}\left(\binom{n+k-1}{n-k-1}u_1+\binom{n+k-1}{n-k-2}\alpha u_0\right).
$$
\end{proposition}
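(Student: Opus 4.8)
The plan is to treat this as the explicit solution of the constant-coefficient recurrence $u_{n+2}=\alpha u_{n+1}-u_n$. First I would reduce everything to one ``fundamental'' solution: letting $\rho,\rho^{-1}$ denote the roots of the characteristic polynomial $z^2-\alpha z+1$ (so $\rho+\rho^{-1}=\alpha$), set $P_m=(\rho^m-\rho^{-m})/(\rho-\rho^{-1})$, so that $P_0=0$, $P_1=1$, $P_{-1}=-1$ and $P_{m+1}=\alpha P_m-P_{m-1}$. Since $P_n$ and $P_{n-1}$, viewed as sequences in $n$, both satisfy the recurrence, and matching values at $n=0,1$ is immediate, every solution is $u_n=u_1P_n-u_0P_{n-1}$. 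The whole statement thus reduces to expanding $P_{2n}$, $P_{2n-1}$ and $P_{2n-2}$ as polynomials in $\alpha$.

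For that expansion I would use, and prove by a short induction on $m$ via Pascal's identity, the classical formula
\[
P_m=\sum_{j\ge 0}(-1)^j\binom{m-1-j}{j}\alpha^{m-1-2j}.
\]
Substituting $m=2n$, $m=2n-1$, $m=2n-2$ and re-indexing the sum by $k=n-1-j$ (respectively $k=n-2-j$ when $m=2n-2$) rewrites each $P_m$ as a sum over $k$ of monomials $c_k\alpha^{2k}$ or $c_k\alpha^{2k+1}$ with $c_k$ a single binomial coefficient; under this substitution $(-1)^j=(-1)^{n+k+1}$, which is exactly the sign occurring in the statement, and the minus sign in $u_n=u_1P_n-u_0P_{n-1}$ combines with it to produce the $+$ in front of the $u_0$-term. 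Feeding these expansions into $u_{2n}=u_1P_{2n}-u_0P_{2n-1}$ and $u_{2n-1}=u_1P_{2n-1}-u_0P_{2n-2}$, collecting the $u_0$- and $u_1$-contributions, and pulling out the single extremal summand that does not fit the generic shape (the monomial of degree $2n-2$ written outside the $\sum$ in the second identity), yields the two asserted formulas.

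I expect the only real friction to be bookkeeping rather than anything conceptual: the polynomials $P_{2n}$, $P_{2n-1}$ and $P_{2n-2}$ have top terms of different degrees, and in the odd case the two sums coming from $u_1P_{2n-1}$ and from $u_0P_{2n-2}$ run over slightly different ranges of $k$, so one must watch the boundary terms $k=0$ and $k=n-1$ (this is where the loose monomial $\alpha^{2n-2}$ arises) and the degenerate binomial coefficients there, using the convention $\binom{m}{r}=0$ for $r<0$. An equivalent alternative, which avoids invoking the identity for $P_m$, is a direct induction on $n$: verify $n=1$ (and, as a sanity check, $n=2$) by hand, and for the inductive step compute $u_{2n+1}=\alpha u_{2n}-u_{2n-1}$ and then $u_{2n+2}=\alpha u_{2n+1}-u_{2n}$ from the inductive hypotheses. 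Multiplication by $\alpha$ shifts $\alpha^{2k}$ to $\alpha^{2k+1}$, so after a shift $k\mapsto k-1$ the two series align termwise, and the coefficient of each surviving monomial is a sum of two adjacent binomial coefficients, which collapses via $\binom{m}{r}=\binom{m-1}{r}+\binom{m-1}{r-1}$ to exactly the coefficient predicted for the next index. Either way the computation is routine once the indices are set up correctly.
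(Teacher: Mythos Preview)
The paper gives no proof at all: it simply says ``The proof is easy and left to the reader.'' So there is nothing to compare against except the author's implicit expectation that a routine argument exists. Your proposal supplies exactly such an argument, and in fact two of them. The route via the fundamental solution $P_m$ (Chebyshev polynomials of the second kind in disguise) together with the classical expansion $P_m=\sum_j(-1)^j\binom{m-1-j}{j}\alpha^{m-1-2j}$ is clean and makes the binomial coefficients appear naturally; your re-indexing $k=n-1-j$ is the right move and correctly produces the sign $(-1)^{n+k+1}$ and the binomials $\binom{n+k}{n-k-1}$, $\binom{n+k-1}{n-k-1}$, $\binom{n+k-1}{n-k-2}$. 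The alternative you sketch---a direct induction in $n$ collapsing pairs of binomial coefficients via Pascal---is presumably closer to what the author had in mind by ``easy,'' but your first route is arguably more transparent.

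One remark worth recording: if you actually carry your computation through, the extremal $k=n-1$ term you isolate from $u_1P_{2n-1}$ is $\alpha^{2n-2}u_1$, not $\alpha^{2n-2}u_2$ as printed in the statement (check $n=1$: the right-hand side would give $u_2$, not $u_1$; and $n=2$: it would give $\alpha^2u_2-u_1-\alpha u_0\neq u_3$). So the statement appears to contain a typo, and your method detects it rather than being derailed by it.
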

\begin{proof}
The proof is easy and left to the reader.  
\end{proof}

For example, applying the above proposition to $u_n=\xi_1(n,t)$ and $\alpha=f(t)=2t^2+10t+10$, we obtain
$$
\begin{aligned}
\xi_1(2n,t)=&\sum_{k=0}^{n-1}(-1)^{n+k+1}2^{2k}(t^2+5t+5)^{2k}\\
&\left(\binom{n+k}{n-k-1}(4t^5+44t^4+178t^3+322t^2+250t+60)-\binom{n+k-1}{n-k-1}(t+1)\right).
\end{aligned}
$$

We finish this section with a list of the first three non-trivial parametrizations:
$$
\begin{aligned}
\xi_1(1,t)&=2t^3+12t^2+19t+6\\
\xi_2(1,t)&=2t^3+14t^2+31t+23\\
\xi_3(1,t)&=2t^3+16t^2+41t+32\\
\xi_4(1,t)&=2t^3+18t^2+49t+39
\end{aligned}
$$

$$
\begin{aligned}
\xi_1(2,t)&=4t^5+44t^4+178t^3+322t^2+249t+59\\
\xi_2(2,t)&=4t^5+48t^4+222t^3+496t^2+539t+228\\
\xi_3(2,t)&=4t^5+52t^4+262t^3+634t^2+729t+317\\
\xi_4(2,t)&=4t^5+56t^4+298t^3+748t^2+879t+386
\end{aligned}
$$

$$
\begin{aligned}
\xi_1(3,t)&=8t^7+128t^6+836t^5+2864t^4+5496t^3+5816t^2+3061t+584\\
\xi_2(3,t)&=8t^7+136t^6+964t^5+3692t^4+8256t^3+10792t^2+7639t+2257\\
\xi_3(3,t)&=8t^7+144t^6+1084t^5+4408t^4+10416t^3+14248t^2+10419t+3138\\
\xi_4(3,t)&=8t^7+152t^6+1196t^5+5036t^4+12216t^3+17024t^2+12601t+3821
\end{aligned}
$$


\section{Other parametrizations}\label{other}

Note that by replacing $t$ by $t^2$ in a polynomial parametrization of degree $n$, we obtain a polynomial parametrization of degree $2n$. Since there exist non-trivial polynomial parametrizations of any odd degree, there are non-trivial polynomial parametrization of any degree $>2$. 

The following 
$$
\begin{aligned}
P_1(t)=&\frac{t^4+17t^3+104t^2+262t+204}{4}\\
P_2(t)=&\frac{t^4+19t^3+138t^2+458t+592}{4}\\
P_3(t)=&\frac{t^4+21t^3+168t^2+602t+812}{4}\\
P_4(t)=&\frac{t^4+23t^3+194t^2+718t+984}{4}
\end{aligned}
$$
gives a polynomial parametrization $P=(P_1,P_2,P_3,P_4)$ over $\Q$ that takes an integer value for each integer $t$ not congruent to $3$ modulo $4$. Hence one obtains two \emph{new} polynomial parametrizations over $\Z$ as follows\,:
$$
\begin{aligned}
P_1(2t)=&4t^4+34t^3+104t^2+131t+51\\
P_2(2t)=&4t^4+38t^3+138t^2+229t+148\\
P_3(2t)=&4t^4+42t^3+168t^2+301t+203\\
P_4(2t)=&4t^4+46t^3+194t^2+359t+246
\end{aligned}
$$
and 
$$
\begin{aligned}
P_1(4t+1)=&64t^4+336t^3+644t^2+525t+147\\
P_2(4t+1)=&64t^4+368t^3+804t^2+795t+302\\
P_3(4t+1)=&64t^4+400t^3+948t^2+1005t+401\\
P_4(4t+1)=&64t^4+432t^3+1076t^2+1179t+480.
\end{aligned}
$$
They are new in the sense that they generate B\"uchi sequences of integers that were not in the image of any of the $\xi(n,t)$.

The following is another polynomial parametrization over $\Q$, but it does not reach any integer solution\,:
$$
\begin{aligned}
&\frac{1}{3}\left(4t^4+18t^3+14t^2-15t-8\right)\\
&\frac{1}{3}\left(4t^4+22t^3+36t^2+19t+5\right)\\
&\frac{1}{3}\left(4t^4+26t^3+54t^2+35t+2\right)\\
&\frac{1}{3}\left(4t^4+30t^3+68t^2+45t+1\right).
\end{aligned}
$$

Next we list some parametrizations $R_1,\dots,R_{15}$ in $\Q(t)\smallsetminus\Z[t]$ (some of them have in their image some B\"uchi sequences of integers, but always finitely many). Note that the four components of each parametrization always come with the same denominator (as reduced fractions). Note also that each of these parametrizations generate infinitely many new parametrizations by iterating the map $\zeta$. 

$$
(4t^2+4)R_1(t)=
\begin{cases}
&t^9+4t^8-t^7-10t^6+9t^5-23t^3+10t^2-10t-4\\
&t^9+4t^8+t^7-4t^6+13t^5+8t^4-9t^3+16t^2+2t\\
&t^9+4t^8+3t^7+2t^6+13t^5+8t^4-3t^3+6t^2+10t-4\\
&t^9+4t^8+5t^7+8t^6+9t^5+19t^3+4t^2-10t+8
\end{cases}
$$

$$
(4t^2+4)R_2(t)=
\begin{cases}
&t^9-4t^8-t^7+10t^6+9t^5-23t^3-10t^2-10t+4\\
&t^9-4t^8+t^7+4t^6+13t^5-8t^4-9t^3-16t^2+2t\\
&t^9-4t^8+3t^7-2t^6+13t^5-8t^4-3t^3-6t^2+10t+4\\
&t^9-4t^8+5t^7-8t^6+9t^5+19t^3-4t^2-10t-8
\end{cases}
$$

$$
8tR_3(t)=
\begin{cases}
&t^6-10t^5+38t^4-84t^3+120t^2-96t+48\\
&t^6-8t^5+30t^4-68t^3+96t^2-88t+48\\
&t^6-6t^5+18t^4-44t^3+72t^2-80t+48\\
&t^6-4t^5+2t^4+12t^3-48t^2+72t-48
\end{cases}
$$

$$
4tR_4(t)=
\begin{cases}
&t^6-17t^5+113t^4-369t^3+600t^2-420t+96\\
&t^6-15t^5+93t^4-295t^3+480t^2-352t+96\\
&t^6-13t^5+69t^4-193t^3+312t^2-284t+96\\
&t^6-11t^5+41t^4-39t^3-96t^2+216t-96
\end{cases}
$$

$$
(4t^2-28)R_5(t)=
\begin{cases}
&t^7-6t^6-3t^5+60t^4-33t^3-162t^2+107t+60\\
&t^7-4t^6-5t^5+36t^4-17t^3-88t^2+93t+32\\
&t^7-2t^6-11t^5+12t^4+55t^3-14t^2-117t+4\\
&t^7-21t^5+12t^4+111t^3-60t^2-163t+24
\end{cases}
$$

$$
(4t^2-4t)R_6(t)=
\begin{cases}
&t^7-16t^6+98t^5-300t^4+506t^3-480t^2+240t-48\\
&t^7-14t^6+80t^5-240t^4+406t^3-396t^2+212t-48\\
&t^7-12t^6+58t^5-156t^4+270t^3-296t^2+184t-48\\
&t^7-10t^6+32t^5-24t^4-70t^3+180t^2-156t+48
\end{cases}
$$

$$
(8t^2-16t)R_7(t)=
\begin{cases}
&t^7-14t^6+86t^5-300t^4+644t^3-840t^2+624t-192\\
&t^7-12t^6+70t^5-240t^4+516t^3-688t^2+544t-192\\
&t^7-10t^6+50t^5-156t^4+340t^3-504t^2+464t-192\\
&t^7-8t^6+26t^5-24t^4-76t^3+288t^2-384t+192
\end{cases}
$$

$$
(8t^3+48t^2+32t)R_8(t)=
\begin{cases}
&t^8+12t^7+50t^6+96t^5+144t^4+240t^3+304t^2+288t+128\\
&t^8+14t^7+78t^6+240t^5+488t^4+664t^3+608t^2+384t+128\\
&t^8+16t^7+102t^6+336t^5+656t^4+896t^3+848t^2+480t+128\\
&t^8+18t^7+122t^6+408t^5+792t^4+1080t^3+1024t^2+576t+128
\end{cases}
$$

$$
(12t^2+12)R_9(t)=
\begin{cases}
&t^7-6t^6+21t^5-60t^4+111t^3-162t^2+163t-108\\
&t^7-4t^6+19t^5-44t^4+95t^3-136t^2+149t-96\\
&t^7-2t^6+13t^5-28t^4+71t^3-110t^2+131t-84\\
&t^7+3t^5+12t^4-33t^3+84t^2-107t+72
\end{cases}
$$

$$
(3t^2-15)R_{10}(t)=
\begin{cases}
&t^7-6t^6+3t^5+30t^4-33t^3-27t^2+37t-27\\
&t^7-4t^6+t^5+16t^4-25t^3-10t^2+47t-12\\
&t^7-2t^6-5t^5+2t^4+23t^3+7t^2-43t+3\\
&t^7-15t^5+12t^4+39t^3-24t^2-17t-18
\end{cases}
$$

$$
(t^3-t^2-t+1)R_{11}(t)=
\begin{cases}
&t^4-6t^2-28t-39\\
&t^4+3t^3+11t^2+25t+32\\
&t^4+6t^3+20t^2+22t+23\\
&t^4+9t^3+21t^2+35t+6
\end{cases}
$$

$$
4t^2R_{12}(t)=
\begin{cases}
&t^3+3t^2+9t-9\\
&t^3+t^2-3t+9\\
&t^3-t^2-3t-9\\
&t^3-3t^2+9t+9
\end{cases}
$$

$$
4t^2R_{13}(t)=
\begin{cases}
&t^3-12t^2-72t+288\\
&t^3-4t^2-24t+288\\
&t^3+4t^2-24t-288\\
&t^3+12t^2-72t-288
\end{cases}
$$

$$
(240t^3-588240t^2+462160080t-113856219120)R_{14}(t)=
$$
$$
\begin{cases}
&t^4-1508t^3-616026t^2+1404632668t-262572118559\\
&t^4-2228t^3+1379094t^2-321457172t+190871636401\\
&t^4-2948t^3+2913414t^2-815367812t-172760883839\\
&t^4-3668t^3+3986934t^2-1072427252t-221781743279
\end{cases}
$$

$$
(240t^3+45360t^2-15574320t-3067284720)R_{15}(t)=
$$
$$
\begin{cases}
&t^4-1508t^3-616026t^2+381656668t+123089833441\\
&t^4-788t^3-249546t^2+327100108t+100683524881\\
&t^4-68t^3-343866t^2-264749252t-71708114879\\
&t^4+652t^3-898986t^2-398563412t-12874461839.
\end{cases}
$$

\section{Some basic properties of the sequence $(\xi(n,t))_n$}\label{ippo}

In this section we prove that we need only studying $\xi(n,t)$ for $t\ge0$ and we show that for fix $t\ge0$, the sequences $(\xi_i(n,t))_i$ and $(\xi_i(n,t))_n$ are strictly increasing sequences of positive integers.

A straightforward computation shows that for all $n\ge0$ and $t\in\Z$ we have 
\begin{equation}\label{sym}
\xi_4(n,t)=-\xi_1(n,-t-5)\qquad\textrm{and}\qquad
\xi_3(n,t)=-\xi_2(n,-t-5).
\end{equation}
In particular, since
$$
\xi(1,-2)=(0,1,-2,-3)\qquad\textrm{and}\qquad \xi(1,-1)=(-3,4,5,6) 
$$
are trivial sequences, also $\xi(1,-3)$ and $\xi(1,-4)$ are trivial sequences. 

\begin{lemma}\label{triv}
For each $n\ge0$, the B\"uchi sequences $\xi(n,-4)$, $\xi(n,-3)$, $\xi(n,-2)$ and $\xi(n,-1)$ are trivial sequences. 
\end{lemma}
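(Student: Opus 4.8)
The plan is to exploit the fact that $f(t)=2t^{2}+10t+10$ degenerates at exactly the four values in question: one computes $f(-1)=f(-4)=2$ and $f(-2)=f(-3)=-2$. By Theorem~\ref{bigteo}(3) each of the four $\Z^{4}$-valued sequences $(\xi(n,-1))_{n},\dots,(\xi(n,-4))_{n}$ therefore satisfies a \emph{constant}-coefficient second-order recurrence, namely $\xi(n+2,t)=2\,\xi(n+1,t)-\xi(n,t)$ for $t\in\{-1,-4\}$ and $\xi(n+2,t)=-2\,\xi(n+1,t)-\xi(n,t)$ for $t\in\{-2,-3\}$, whose characteristic polynomial $(X\mp 1)^{2}$ has a double root. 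Consequently each coordinate $\xi_{i}(n,t)$ is an arithmetic progression $A_{i}+B_{i}n$ in $n$ when $t\in\{-1,-4\}$, and is of the form $(-1)^{n}(A_{i}+B_{i}n)$ when $t\in\{-2,-3\}$, with $A_{i},B_{i}\in\Z$.

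To pin down $A_{i},B_{i}$ I need two initial values in each case. For $n=0$ these are $\xi(0,t)=(t+1,t+2,t+3,t+4)$ by Theorem~\ref{bigteo}(1); for $n=1$ I read off $\xi(1,-1)=(-3,4,5,6)$ and $\xi(1,-2)=(0,1,-2,-3)$ from Theorem~\ref{bigteo}(2), and then obtain $\xi(1,-3)$ and $\xi(1,-4)$ from the relations \eqref{sym} (which, on $4$-tuples, amount to reversal followed by a global sign change, so that $\xi(n,-3)$ and $\xi(n,-4)$ are the negated reversals of $\xi(n,-2)$ and $\xi(n,-1)$, and triviality is transported with parameter $x\mapsto -x-5$). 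Solving the sixteen scalar recurrences then gives the closed forms, e.g. $\xi(n,-1)=(-3n,\,3n+1,\,3n+2,\,3n+3)$, $\xi(n,-2)=(-1)^{n}(n-1,\,-n,\,n+1,\,n+2)$, $\xi(n,-3)=(-1)^{n}(-n-2,\,-n-1,\,n,\,1-n)$, and $\xi(n,-4)=(-3n-3,\,-3n-2,\,-3n-1,\,3n)$. For each of the four I then exhibit the integer $x$ witnessing triviality — respectively $x=3n-1$, $x=n-2$, $x=-n-3$ and $x=-3n-4$ — by verifying $\xi_{i}(n,t)^{2}=(x+i)^{2}$ for $i=1,2,3,4$; the factor $(-1)^{n}$ and the individual coordinate signs disappear upon squaring, so this reduces to four polynomial identities in $n$.

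The only point worth flagging is that triviality is \emph{not} preserved coordinatewise by the B\"uchi recurrence — in a trivial sequence each coordinate equals $\pm(x+i)$ with signs that may differ from coordinate to coordinate — so one cannot simply induct ``$\xi(n,t)$ trivial $\Rightarrow\xi(n+1,t)$ trivial''; passing through the explicit solution of the degenerate recurrence is exactly what makes the argument go. An equivalent and perhaps cleaner route is to \emph{guess} the four closed forms above at the outset (each is of the shape $\mu_{j}(x_{n}+1,x_{n}+2,x_{n}+3,x_{n}+4)$, or $(-1)^{n}$ times such, with $x_{n}$ affine in $n$) and verify them by induction using Theorem~\ref{bigteo}(3) together with the linearity of the $\mu_{j}$ and the identity $2(y+i)-(y'+i)=(2y-y')+i$; this avoids invoking the general solution of a recurrence with a repeated root. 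Either way the computations are routine and no step presents a genuine obstacle.
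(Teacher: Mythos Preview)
Your proposal is correct and follows essentially the same approach as the paper: specialize the recurrence of Theorem~\ref{bigteo}(3) at $t\in\{-1,-2,-3,-4\}$ where $f(t)=\pm 2$, solve the resulting constant-coefficient recurrences from the initial values $\xi(0,t),\xi(1,t)$, and use the symmetry \eqref{sym} to transport the computation from $t=-1,-2$ to $t=-4,-3$. The paper arrives at the very same closed forms for $\xi(n,-1),\dots,\xi(n,-4)$ that you write down; your additional commentary (explicit $x$-witnesses, the remark on why naive induction on triviality fails, and the alternative guess-and-verify route) is correct and helpful but does not change the underlying argument.
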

\begin{proof}
From Theorem \ref{bigteo}, we have for $t=-1$
$$
\xi_i(n+2,-1)=2\xi_i(n+1,-1)-\xi_i(n,-1)
$$
for each $i=1,2,3,4$, with initial values for $n=0,1$ (reminding that $\xi(0,t)=(t+1,t+2,t+3,t+4)$):
$$
\begin{array}{cc}
\xi_1(0,-1)=0&\xi_1(1,-1)=-3\\
\xi_2(0,-1)=1&\xi_2(1,-1)=4\\
\xi_3(0,-1)=2&\xi_3(1,-1)=5\\
\xi_4(0,-1)=3&\xi_4(1,-1)=6;
\end{array}
$$
and for $t=-2$:
$$
\xi_i(n+2,-2)=-2\xi_i(n+1,-2)-\xi_i(n,-2)
$$
for each $i=1,2,3,4$, with initial values for $n=0,1$
$$
\begin{array}{cc}
\xi_1(0,-2)=-1&\xi_1(1,-2)=0\\
\xi_2(0,-2)=0&\xi_2(1,-2)=1\\
\xi_3(0,-2)=1&\xi_3(1,-2)=-2\\
\xi_4(0,-2)=2&\xi_4(1,-2)=-3.
\end{array}
$$
Solving the eight recurrence relations above, we obtain\,:
$$
\xi(n,-1)=(-3n,3n+1,3n+2,3n+3)\qquad\textrm{and}\qquad \xi(n,-2)=(-1)^{n}(n-1,-n,n+1,n+2)
$$
which are clearly trivial sequences. From Equations \eqref{sym}, we have
$$
\begin{aligned}
\xi_1(n,-3)=-\xi_4(n,-2)\qquad&\qquad\xi_2(n,-3)=-\xi_3(n,-2)\\
\xi_4(n,-3)=-\xi_1(n,-2)\qquad&\qquad\xi_3(n,-3)=-\xi_2(n,-2),
\end{aligned}
$$ 
hence 
$$
\xi(n,-3)=(-1)^n(-n-2,-n-1,n,-n+1), 
$$
and 
$$
\begin{aligned}
\xi_1(n,-4)=-\xi_4(n,-1)\qquad&\qquad\xi_2(n,-4)=-\xi_3(n,-1)\\
\xi_4(n,-4)=-\xi_1(n,-1)\qquad&\qquad\xi_3(n,-4)=-\xi_2(n,-1),
\end{aligned}
$$ 
hence 
$$
\xi(n,-4)=(-3n-3,-3n-2,-3n-1,3n)
$$
which are also clearly trivial sequences. 
\end{proof}

\begin{remark}
We deduce from Equations \eqref{sym} and Lemma \ref{triv} that it is enough to study the parametrizations $\xi(n,t)$ for $t\geq0$. 
\end{remark}

\begin{lemma}\label{crec}
If $(u_n)$ is a sequence of integers satisfying $u_{n+2}=\alpha u_{n+1}-u_n$ for each $n\geq0$, with $\alpha\ge 2$, and $u_1>u_0>0$, then for all $n\ge 1$ it satisfies $u_{n+1}>(\alpha-1)u_{n}>0$. 
\end{lemma}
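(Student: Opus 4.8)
The plan is a straightforward induction on $n$, the point being that the inductive hypothesis $u_{n+1}>(\alpha-1)u_n>0$ together with $\alpha-1\ge1$ already forces the consecutive terms to be increasing, which is exactly what is needed to bound the next term of the recurrence from below.

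For the base case $n=1$, I would write $u_2=\alpha u_1-u_0$ and use the hypothesis $u_1>u_0>0$ to get
$$
u_2=\alpha u_1-u_0>\alpha u_1-u_1=(\alpha-1)u_1,
$$
and then $(\alpha-1)u_1>0$ since $\alpha\ge2$ and $u_1>0$. This is the one place where the strict inequality $u_1>u_0$ is used, so I would be careful to keep it strict.

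For the inductive step, assume $u_{n+1}>(\alpha-1)u_n>0$ for some $n\ge1$. Since $\alpha-1\ge1$, the hypothesis gives $u_{n+1}>(\alpha-1)u_n\ge u_n$, hence $u_n<u_{n+1}$ and also $u_n>0$ (being the positive quantity $u_n$ inside $(\alpha-1)u_n>0$). Then
$$
u_{n+2}=\alpha u_{n+1}-u_n>\alpha u_{n+1}-u_{n+1}=(\alpha-1)u_{n+1},
$$
and $(\alpha-1)u_{n+1}>0$ because $\alpha-1\ge1$ and $u_{n+1}>(\alpha-1)u_n>0$. This completes the induction.

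There is no real obstacle here: the argument is a two-line induction, and the only subtlety worth flagging is the implicit use of $\alpha-1\ge1$ (i.e.\ $\alpha\ge2$) to turn the ratio bound $u_{n+1}>(\alpha-1)u_n$ into monotonicity $u_{n+1}>u_n$, which is precisely what makes $u_{n+2}=\alpha u_{n+1}-u_n$ exceed $(\alpha-1)u_{n+1}$.
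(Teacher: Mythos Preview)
Your proof is correct and follows essentially the same induction as the paper. The only cosmetic difference is in the inductive step: the paper bounds $u_n$ by $\dfrac{u_{n+1}}{\alpha-1}$ directly from the hypothesis and then checks $\alpha-\dfrac{1}{\alpha-1}\ge\alpha-1$, whereas you first weaken $u_{n+1}>(\alpha-1)u_n$ to $u_{n+1}>u_n$ (using $\alpha-1\ge1$) and then bound $u_n$ by $u_{n+1}$; both routes amount to the same one-line computation.
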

\begin{proof}
We have 
$$
u_2=\alpha u_1-u_0=(\alpha-1)u_1+u_1-u_0>(\alpha-1)u_1>0.
$$
Suppose that $u_{n+1}>(\alpha-1)u_{n}>0$ for some $n\ge1$. We have
$$
u_{n+2}=\alpha u_{n+1}-u_{n}>\alpha u_{n+1}-\frac{u_{n+1}}{\alpha-1}\geq (\alpha-1)u_{n+1}.
$$
\end{proof}

\begin{corollary}
For each $t\ge0$ and for each $i=1,\dots,4$, we have 
$$
\xi_i(n+1,t)>(2t^2+10t+9)\xi_i(n,t).
$$
\end{corollary}
\begin{proof}
Fix $t\ge0$. We apply Lemma \ref{crec} to the sequence $u_n=\xi_i(n,t)$ for each $i=1,\dots,4$. By Item \ref{bigteorec} of Theorem \ref{bigteo}, $u_n$ satisfy the recurrence relation $u_{n+2}=\alpha u_{n+1}-u_n$, with 
$$
\alpha=f(t)=2t^2+10t+10\ge2.
$$ 
By Items \ref{bigteon0} and \ref{bigteon1} of Theorem \ref{bigteo}, we have
$$
u_1=
\begin{cases}
\xi_1(1,t)=2t^3+12t^2+19t+6>t+1=\xi_1(0,t)=u_0>0&\textrm{if } i=1\\
\xi_2(1,t)=2t^3+14t^2+31t+23>t+2=\xi_2(0,t)=u_0>0&\textrm{if } i=2\\
\xi_3(1,t)=2t^3+16t^2+41t+32>t+3=\xi_3(0,t)=u_0>0&\textrm{if } i=3\\
\xi_4(1,t)=2t^3+18t^2+49t+39>t+4=\xi_4(0,t)=u_0>0&\textrm{if } i=4
\end{cases}
$$
and we conclude in each case by Lemma \ref{crec}.
\end{proof}

\begin{lemma}\label{mixcrec}
If $(v_n)$ and $(w_n)$ are sequences of integers both satisfying the same recurrence relation $u_{n+2}=\alpha u_{n+1}-u_n$ for each $n\geq0$, with $\alpha\ge 2$, and $u_1>u_0>0$, and such that $w_0\ge v_0$ and $w_1-w_0>v_1-v_0$, then for all $n\ge 1$ we have $w_{n+1}-v_{n+1}>(\alpha-1)(w_{n}-v_n)>0$. 
\end{lemma}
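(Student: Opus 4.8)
The plan is to imitate the proof of Lemma \ref{crec} but applied to the difference sequence $d_n = w_n - v_n$. First I would observe that since $(v_n)$ and $(w_n)$ both satisfy the linear recurrence $u_{n+2}=\alpha u_{n+1}-u_n$, so does $d_n = w_n - v_n$, because the recurrence is linear and homogeneous. Thus $d_{n+2}=\alpha d_{n+1}-d_n$ for all $n\ge 0$, with initial data $d_0 = w_0 - v_0 \ge 0$ and $d_1 = w_1 - v_1 = (w_1-w_0)+(w_0-v_0)-(v_1-v_0) > (w_1-w_0)-(v_1-v_0) \ge 0$ (using $w_0\ge v_0$ and $w_1-w_0>v_1-v_0$), so in fact $d_1 > 0$ and $d_1 > d_0$ when $w_0=v_0$; in general $d_1-d_0 = (w_1-w_0)-(v_1-v_0)>0$, so $d_1 > d_0 \ge 0$.

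Next I would run the same two-line induction as in Lemma \ref{crec}. For the base case, $d_2 = \alpha d_1 - d_0 = (\alpha-1)d_1 + (d_1 - d_0) > (\alpha-1)d_1 > 0$, using $d_1 > d_0$ and $d_1 > 0$ (note that if $d_0 = d_1 = 0$ the hypothesis $w_1-w_0>v_1-v_0$ is violated, and if $d_0 = 0 < d_1$ we still get $d_2 = \alpha d_1 > (\alpha-1)d_1 > 0$; the inequality $d_1 > d_0$ together with $d_1>0$ covers all cases). For the inductive step, assuming $d_{n+1} > (\alpha-1)d_n > 0$ for some $n \ge 1$, we get
$$
d_{n+2} = \alpha d_{n+1} - d_n > \alpha d_{n+1} - \frac{d_{n+1}}{\alpha-1} \ge (\alpha-1) d_{n+1},
$$
where the first inequality uses $d_n < d_{n+1}/(\alpha-1)$ from the inductive hypothesis and the second uses $\alpha \ge 2$ (so $\alpha - 1/(\alpha-1) \ge \alpha - 1$). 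This closes the induction and gives $d_{n+1} > (\alpha-1)d_n > 0$, i.e. $w_{n+1}-v_{n+1} > (\alpha-1)(w_n-v_n) > 0$, for all $n \ge 1$.

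I do not anticipate any real obstacle: the statement is essentially Lemma \ref{crec} transported to the difference sequence, and the only point requiring a little care is checking that the initial inequalities $d_0 \ge 0$, $d_1 > 0$, $d_1 > d_0$ follow correctly from the two hypotheses $w_0 \ge v_0$ and $w_1 - w_0 > v_1 - v_0$ — in particular that one cannot have $d_0 = d_1$, which would break the base case. That is immediate since $d_1 - d_0 = (w_1-w_0)-(v_1-v_0) > 0$. The hypotheses about $v_n$, $w_n$ individually being positive increasing sequences are not actually needed for the argument beyond what is used to make the statement meaningful; only linearity of the recurrence and the three initial inequalities on $d_n$ matter.
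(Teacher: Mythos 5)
Your proof is correct and takes essentially the same route as the paper's: the paper likewise establishes the base case via $w_2-v_2=(\alpha-1)(w_1-v_1)+\bigl((w_1-w_0)-(v_1-v_0)\bigr)>(\alpha-1)(w_1-v_1)>0$ and then runs the identical inductive step on the difference $w_n-v_n$, which you have merely renamed $d_n$. Your side observation that the positivity/monotonicity hypotheses on $v_n$ and $w_n$ individually are not needed is also consistent with the paper's argument, which never uses them.
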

\begin{proof}
We have 
$$
\begin{aligned}
w_2-v_2&=\alpha w_1-w_0-(\alpha v_1-v_0)\\
&=(\alpha-1)(w_1-v_1)+w_1-w_0-(v_1-v_0)\\
&>(\alpha-1)(w_1-v_1)\\
&>(\alpha-1)(w_0-v_0)\ge0.
\end{aligned}
$$
If for some $n\ge1$ we have $w_{n+1}-v_{n+1}>(\alpha-1)(w_{n}-v_n)>0$ then 
$$
\begin{aligned}
w_{n+2}-v_{n+2}&=\alpha w_{n+1}-w_n-(\alpha v_{n+1}-v_n)\\
&=\alpha(w_{n+1}-v_{n+1})-(w_n-v_n)\\
&>\alpha(w_{n+1}-v_{n+1})-\frac{w_{n+1}-v_{n+1}}{\alpha-1}\\
&>(\alpha-1)(w_{n+1}-v_{n+1})>0.
\end{aligned}
$$ 
\end{proof}

\begin{corollary}
For each $n$ and each $t\ge0$, the sequence $\xi(n,t)$ is a strictly increasing non-trivial B\"uchi sequence of positive integers. Moreover, for each $i=1,2,3$ and for each $n\ge 1$ we have 
$$
\xi_{i+1}(n+1,t)-\xi_i(n+1,t)>(2t^2+10t+9)(\xi_{i+1}(n,t)-\xi_i(n,t)).
$$
\end{corollary}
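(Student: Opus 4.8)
The plan is to deduce everything from the two preceding lemmas (Lemma \ref{crec} and Lemma \ref{mixcrec}) together with the explicit data in Theorem \ref{bigteo}. First I would fix $t\ge0$ and establish that each component sequence $(\xi_i(n,t))_n$ is a strictly increasing sequence of positive integers: this is immediate from the previous Corollary (which gives $\xi_i(n+1,t)>(2t^2+10t+9)\xi_i(n,t)\ge\xi_i(n,t)>0$ once one knows $\xi_i(0,t)>0$, which holds for $t\ge0$). So the only real content of the ``strictly increasing'' claim is the statement about the \emph{spatial} direction, i.e.\ that $\xi_1(n,t)<\xi_2(n,t)<\xi_3(n,t)<\xi_4(n,t)$ for every $n$.

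For the spatial monotonicity I would apply Lemma \ref{mixcrec} three times, once for each consecutive pair $(i,i+1)$, $i=1,2,3$. For fixed $i$, set $v_n=\xi_i(n,t)$ and $w_n=\xi_{i+1}(n,t)$. By Item \ref{bigteorec} of Theorem \ref{bigteo} both satisfy the recurrence $u_{n+2}=f(t)u_{n+1}-u_n$ with $\alpha=f(t)=2t^2+10t+10\ge2$; by Items \ref{bigteon0}--\ref{bigteon1} one checks directly that $u_1>u_0>0$ for each, that $w_0=\xi_{i+1}(0,t)=t+i+1\ge t+i=v_0$, and — reading off the $\xi(1,t)$ coefficients — that $w_1-w_0>v_1-v_0$ (e.g.\ for $i=1$ one compares $2t^3+14t^2+31t+23-(t+2)$ with $2t^3+12t^2+19t+6-(t+1)$, and the difference is $2t^2+12t+18>0$ for $t\ge0$; the cases $i=2,3$ are the analogous routine polynomial inequalities). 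Hence Lemma \ref{mixcrec} gives, for all $n\ge1$,
$$
\xi_{i+1}(n+1,t)-\xi_i(n+1,t)>(2t^2+10t+9)\bigl(\xi_{i+1}(n,t)-\xi_i(n,t)\bigr)>0,
$$
which is exactly the displayed ``Moreover'' inequality, and in particular $\xi_i(n,t)<\xi_{i+1}(n,t)$ for all $n\ge1$; the case $n=0$ is $t+i<t+i+1$, trivially true.

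It remains to note that $\xi(n,t)$ is a B\"uchi sequence (Item 5 of Theorem \ref{bigteo}) and that it is non-trivial: a trivial sequence has the form $(\pm(x+1),\pm(x+2),\pm(x+3),\pm(x+4))$ whose entries have absolute values in arithmetic progression with common difference $1$, whereas for $n\ge1$ and $t\ge0$ the gaps $\xi_{i+1}(n,t)-\xi_i(n,t)$ are polynomials in $t$ of degree $2n+1\ge3$ (one can also just observe they exceed $2t^2+10t+9$ by the inequality just proved when $n\ge1$, or are already $2t^2+\cdots$ when $n=1$), so they cannot all equal $1$; hence $\xi(n,t)$ is non-trivial. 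The case $n=0$ is indeed trivial in both senses of the word, so the ``non-trivial'' assertion should be read (as the section's running discussion makes clear) for $n\ge1$, or one simply restricts to $n\ge1$ here. The main obstacle is purely bookkeeping: verifying the three base-case inequalities $w_1-w_0>v_1-v_0$ and $u_1>u_0>0$ from the listed cubic coefficients of $\xi(1,t)$ and checking that $f(t)\ge2$ and $\xi_i(0,t)>0$ for $t\ge0$; there is no conceptual difficulty once Lemmas \ref{crec} and \ref{mixcrec} are in hand.
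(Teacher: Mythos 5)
Your proposal is correct and follows essentially the same route as the paper: three applications of Lemma \ref{mixcrec} to the consecutive pairs $(\xi_i,\xi_{i+1})$ with the base cases read off from Items \ref{bigteon0} and \ref{bigteon1} of Theorem \ref{bigteo} (your only slip is harmless arithmetic: the difference $(w_1-w_0)-(v_1-v_0)$ for $i=1$ is $2t^2+12t+16$, not $2t^2+12t+18$). Your explicit treatment of non-triviality, and the observation that $\xi(0,t)$ is in fact trivial so the claim should be read for $n\ge1$, is a useful addition that the paper's own proof leaves implicit.
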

\begin{proof}
Fix $t\ge0$. We will apply Lemma \ref{mixcrec} to the sequences $v_n=\xi_1(n,t)$ and $w_n=\xi_2(n,t)$. By Item \ref{bigteorec} of Theorem \ref{bigteo}, both $v_n$ and $w_n$ satisfy the recurrence relation $u_{n+2}=\alpha u_{n+1}-u_n$, with 
$$
\alpha=f(t)=2t^2+10t+10\ge2.
$$ 
By Items \ref{bigteon0} and \ref{bigteon1} of Theorem \ref{bigteo}, we have
$$
v_1=\xi_1(1,t)=2t^3+12t^2+19t+6>t+1=\xi_1(0,t)=v_0>0,
$$
$$
w_1=\xi_2(1,t)=2t^3+14t^2+31t+23>t+2=\xi_2(0,t)=w_0>0,
$$
$$
w_0=\xi_2(0,t)=t+2> t+1=\xi_1(0,t)=v_0,
$$
and 
$$
w_1-w_0=2t^3+14t^2+30t+21>2t^3+12t^2+18t+5=v_1-v_0
$$
so all the hypothesis of Lemma \ref{mixcrec} are satisfied and we deduce that $\xi_2(n,t)-\xi_1(n,t)$ is a positive integer for each $n\ge0$ and that for each $n\ge 1$ we have
$$
\xi_{2}(n+1,t)-\xi_1(n+1,t)>(f(t)-1)(\xi_{2}(n,t)-\xi_1(n,t)).
$$
The two other cases are verified similarly. 
\end{proof}

\section{A family of curves associated to length $5$ solutions}\label{hyp}

Each length $4$ integer sequence $(x_1,x_2,x_3,x_4)$ might extend to the right or to the left. 
For given integers $n\ge1$ and $t\ge0$, a B\"uchi sequence $\xi(n,t)$ extends to the right if and only if $2\xi_4^2(n,t)-\xi_3^2(n,t)+2$ is a square, and it extends to the left if and only if $2\xi_1^2(n,t)-\xi_2^2(n,t)+2$ is a square. So for each $n\ge1$, we want to know whether or not the curves 
$$
y^2=2\xi_4^2(n,t)-\xi_3^2(n,t)+2\qquad(C_n^{\rm r})
$$
and 
$$
y^2=2\xi_1^2(n,t)-\xi_2^2(n,t)+2\qquad(C_n^{\rm \ell})
$$
have integer points at all. The polynomials on the right have degree $2(2n+1)=4n+2$. Unfortunately, we cannot prove that all these curves are hyperelliptic (we verified$\dagger$ it only up to $n=18$). If it were the case then we would already know that each of them has only finitely many integer points. If there are only finitely many B\"uchi sequences of length $5$, as we suspect, then all but finitely many of these curves has no integer point at all. 

Here is the equation for $(C_1^{\rm r})$
$$
y^2=4t^6+80t^5+620t^4+2400t^3+4905t^2+5020t+2020,
$$
the one for $(C_2^{\rm r})$
$$
16t^{10}+480t^9+6240t^8+46400t^7+218812t^6+684120t^5+1436320t^4+1999600t^3+1766797t^2+894990t+197505,
$$
and the one for $(C_3^{\rm r})$
$$
\begin{aligned}
&64t^{14}+2560t^{13}+46400t^{12}+505600t^{11}+3702416t^{10}+19280000t^9+73635280t^8+209537600t^7+\\
&446403560t^6+708503520t^5+824619920t^4+682516400t^3+379789209t^2+127204040t+19353040.
\end{aligned}
$$

\section{A List of non-parametrized integer points on $X_4$}\label{list}

In this section we list the strictly increasing sequences that are not given by any of the parametrizations presented in this paper. The first column is just the number of the line of the matrix. The graph is a plot of the two first columns. The quantity of points that we are not able to parametrize seems to get exponentially towards zero. 

$$
\left(\begin{array}{ccccc}
1 & 59 & 630 & 889 & 1088 \\
2 & 83 & 516 & 725 & 886 \\
3 & 108 & 6643 & 9394 & 11505 \\
4 & 108 & 707 & 994 & 1215 \\
5 & 177 & 878 & 1229 & 1500 \\
6 & 240 & 839 & 1162 & 1413 \\
7 & 287 & 11838 & 16739 & 20500 \\
8 & 311 & 752 & 1017 & 1226 \\
9 & 334 & 3693 & 5212 & 6379 \\
10 & 386 & 6237 & 8812 & 10789 \\
11 & 419 & 11020 & 15579 & 19078 \\
12 & 430 & 801 & 1048 & 1247 \\
13 & 477 & 3572 & 5029 & 6150 \\
14 & 510 & 1699 & 2348 & 2853 \\
15 & 514 & 1537 & 2112 & 2561 \\
16 & 570 & 7879 & 11128 & 13623 \\
17 & 601 & 4832 & 6807 & 8326 \\
18 & 862 & 1713 & 2264 & 2705 \\
19 & 883 & 25566 & 36145 & 44264 \\
20 & 916 & 26605 & 37614 & 46063 \\
21 & 1346 & 20353 & 28752 & 35201 \\
22 & 1546 & 5257 & 7272 & 8839 \\
23 & 1574 & 2693 & 3468 & 4099 \\
24 & 1616 & 3353 & 4458 & 5339 \\
25 & 1674 & 2695 & 3424 & 4023 \\
26 & 1766 & 8837 & 12372 & 15101 \\
27 & 1812 & 11587 & 16286 & 19905 \\
28 & 2066 & 6963 & 9628 & 11701 \\
29 & 2437 & 13062 & 18311 & 22360 \\
30 & 2477 & 15876 & 22315 & 27274 \\
31 & 2636 & 20685 & 29134 & 35633 \\
32 & 3048 & 5047 & 6454 & 7605 \\
33 & 3051 & 11578 & 16087 & 19584 \\
34 & 3247 & 9746 & 13395 & 16244 \\
35 & 3333 & 36682 & 51769 & 63360 \\
36 & 3673 & 5478 & 6821 & 7940 \\
37 & 4090 & 5701 & 6948 & 8003 \\
38 & 4743 & 36806 & 51835 & 63396 \\
39 & 5148 & 12253 & 16546 & 19935 \\
40 & 5331 & 15988 & 21973 & 26646 \\
41 & 5781 & 22342 & 31063 & 37824 \\
42 & 6449 & 25358 & 35277 & 42964 \\
43 & 6504 & 18065 & 24706 & 29907 \\
44 & 6756 & 33773 & 47282 & 57711 \\
45 & 7104 & 9823 & 11938 & 13731 \\
46 & 7234 & 24447 & 33808 & 41089 \\
47 & 7386 & 17033 & 22928 & 27591 
\end{array}\right)\qquad
\left(\begin{array}{ccccc}
48 & 7414 & 16875 & 22684 & 27283 \\
49 & 7594 & 10997 & 13572 & 15731 \\
50 & 7871 & 12162 & 15293 & 17884 \\
51 & 8562 & 17089 & 22600 & 27009 \\
52 & 9343 & 26408 & 36159 & 43790 \\
53 & 9741 & 19460 & 25739 & 30762 \\
54 & 9752 & 25249 & 34350 & 41501 \\
55 & 10888 & 25561 & 34470 & 41509 \\
56 & 11358 & 47107 & 65644 & 79995 \\
57 & 12129 & 18232 & 22753 & 26514 \\
58 & 12539 & 21430 & 27591 & 32608 \\
59 & 12710 & 46491 & 64508 & 78493 \\
60 & 13305 & 44986 & 62213 & 75612 \\
61 & 13500 & 29971 & 40178 & 48273 \\
62 & 13811 & 38380 & 52491 & 63542 \\
63 & 13835 & 33596 & 45453 & 54802 \\
64 & 13836 & 25693 & 33598 & 39969 \\
65 & 14416 & 40737 & 55778 & 67549 \\
66 & 14843 & 26758 & 34809 & 41320 \\
67 & 15369 & 52022 & 71947 & 87444 \\
68 & 15451 & 47988 & 66083 & 80194 \\
69 & 18793 & 33744 & 43865 & 52054 \\
70 & 20476 & 44445 & 59426 & 71327 \\
71 & 21648 & 38497 & 49954 & 59235 \\
72 & 21924 & 32243 & 39982 & 46449 \\
73 & 22377 & 45328 & 60071 & 71850 \\
74 & 23173 & 49926 & 66695 & 80024 \\
75 & 23174 & 56283 & 76148 & 91811 \\
76 & 25079 & 34122 & 41227 & 47276 \\
77 & 27283 & 57918 & 77231 & 92600 \\
78 & 27699 & 38828 & 47413 & 54666 \\
79 & 31659 & 51412 & 65453 & 76974 \\
80 & 33426 & 58483 & 75652 & 89589 \\
81 & 34030 & 59119 & 76368 & 90383 \\
82 & 45007 & 85256 & 111855 & 133246 \\
83 & 49040 & 61729 & 72222 & 81373 \\
84 & 50430 & 70781 & 86468 & 99717 \\
85 & 51077 & 89226 & 115385 & 136624 \\
86 & 53119 & 70562 & 84477 & 96404 \\
87 & 55506 & 72097 & 85528 & 97119 \\
88 & 58599 & 87328 & 108713 & 126534 \\
89 & 62429 & 86532 & 105253 & 121114 \\
90 & 63626 & 118165 & 154524 & 183827 \\
91 & 64776 & 98815 & 123826 & 144573 \\
92 & 68986 & 106617 & 134072 & 156791 \\
93 & 70143 & 94792 & 114241 & 130830 \\
94 & 77391 & 92440 & 105361 & 116862 
\end{array}\right)
$$

$$
\left(\begin{array}{ccccc}
95 & 78741 & 128278 & 163433 & 192264 \\
96 & 79292 & 91693 & 102606 & 112465 \\
97 & 80251 & 100090 & 116601 & 131048 \\
98 & 81770 & 131541 & 167092 & 196307 \\
99 & 98804 & 118755 & 135806 & 150943 \\
100 & 107366 & 169275 & 213964 & 250813 \\
101 & 108523 & 139124 & 164115 & 185774 \\
102 & 117178 & 144071 & 166680 & 186569 \\
103 & 138004 & 167365 & 192294 & 214343 \\
104 & 154097 & 200846 & 238605 & 271156 \\
105 & 154097 & 200846 & 238605 & 271156 \\
106 & 155730 & 226399 & 279752 & 324447 \\
107 & 158435 & 195324 & 226277 & 253478 \\
108 & 165267 & 222418 & 267631 & 306240 \\
109 & 183122 & 235379 & 277980 & 314869 \\
110 & 186101 & 246132 & 294157 & 335374 \\
111 & 225341 & 270018 & 308287 & 342304 \\
112 & 297422 & 352179 & 399500 & 441781 \\
113 & 311680 & 401551 & 474702 & 537997 \\
114 & 388048 & 447801 & 500470 & 548101 \\
115 & 421884 & 499235 & 566114 & 625887 \\
116 & 435682 & 484931 & 529620 & 570821 \\
117 & 646914 & 739327 & 821408 & 896001 \\
118 & 695001 & 761728 & 823063 & 880134 \\
119 & 740566 & 869223 & 981152 & 1081559 \\
120 & 839833 & 974682 & 1093019 & 1199740 \\
121 & 1052749 & 1157218 & 1253007 & 1341976
\end{array}\right)
$$

\includegraphics[width=400pt]{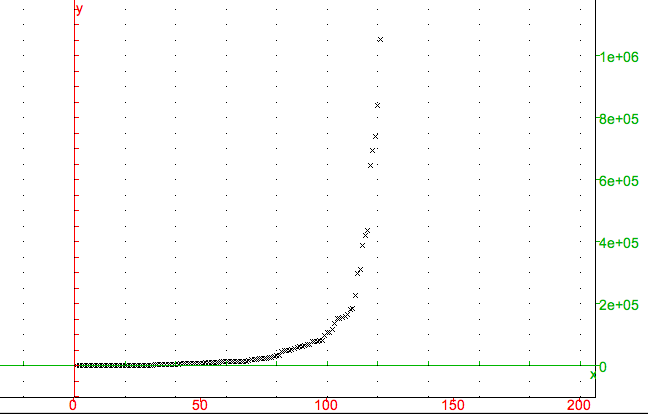}


\vspace{10pt}

\noindent Xavier Vidaux\\
Universidad de Concepci\'on\\
Facultad de Ciencias F\'isicas y Matem\'aticas\\
Departamento de Matem\'atica\\
Casilla 160 C\\
email: xvidaux@udec.cl

\end{document}